\documentclass[11pt,reqno]{amsart}

\usepackage[T1]{fontenc}
\usepackage[utf8]{inputenc}
\usepackage{lmodern}
\usepackage[british]{babel}
\usepackage{mathtools,amssymb,amsthm}
\usepackage{array}
\usepackage{microtype}
\usepackage[hidelinks]{hyperref}
\usepackage{comment}
\hypersetup{
  pdftitle={Weighted well-posedness and kernel stability for coercive evolution equations with measure-valued delays},
  pdfauthor={Hiroki Ishizaka},
  pdfkeywords={measure-valued delay, coercive evolution equations, weighted well-posedness, total-variation stability, narrow convergence, delayed diffusion}
}
\usepackage[nameinlink,capitalize,noabbrev]{cleveref}

\usepackage[pagewise]{lineno}

\numberwithin{equation}{section}

\theoremstyle{plain}
\newtheorem{theorem}{Theorem}[section]
\newtheorem{lemma}[theorem]{Lemma}
\newtheorem{proposition}[theorem]{Proposition}
\newtheorem{corollary}[theorem]{Corollary}

\theoremstyle{definition}
\newtheorem{definition}[theorem]{Definition}

\theoremstyle{remark}
\newtheorem{remark}[theorem]{Remark}
\newtheorem{example}[theorem]{Example}

\crefname{theorem}{theorem}{theorems}
\Crefname{theorem}{Theorem}{Theorems}
\crefname{lemma}{lemma}{lemmas}
\Crefname{lemma}{Lemma}{Lemmas}
\crefname{proposition}{proposition}{propositions}
\Crefname{proposition}{Proposition}{Propositions}
\crefname{corollary}{corollary}{corollaries}
\Crefname{corollary}{Corollary}{Corollaries}
\crefname{definition}{definition}{definitions}
\Crefname{definition}{Definition}{Definitions}
\crefname{assumption}{assumption}{assumptions}
\Crefname{assumption}{Assumption}{Assumptions}
\crefname{remark}{remark}{remarks}
\Crefname{remark}{Remark}{Remarks}
\crefname{example}{example}{examples}
\Crefname{example}{Example}{Examples}

\newcommand{\R}{\mathbb{R}}
\newcommand{\M}{\mathcal{M}}
\newcommand{\W}{\mathcal{W}}
\newcommand{\K}{\mathcal{K}}
\newcommand{\E}{\mathsf{E}}
\newcommand{\dd}{\,\mathrm{d}}
\newcommand{\Tend}{\mathfrak{T}}
\newcommand{\norm}[1]{\left\lVert #1 \right\rVert}
\newcommand{\abs}[1]{\left\lvert #1 \right\rvert}
\newcommand{\pair}[2]{\left\langle #1,#2 \right\rangle}

\title[Weighted well-posedness and kernel stability]
{Weighted well-posedness and kernel stability for coercive evolution equations with measure-valued delays}
\thanks{Affiliation: Team FEM, Matsuyama, Japan.
Email: \texttt{h.ishizaka005@gmail.com}.}
\thanks{Earlier versions of this work appeared as
arXiv:2602.19099v1 (February 2026) and v2 (April 2026).
The present article is a corrected and substantially revised version,
with a new weighted well-posedness argument and a representative-independent
kernel-stability analysis at the natural energy regularity.}

\author{Hiroki Ishizaka}

\subjclass[2020]{35K57, 45K05, 35B35, 34K30, 47D06}
\keywords{diffusion with memory, measure-valued kernels, distributed delay, discrete delay, weighted well-posedness, total-variation stability, narrow convergence}

\begin{document}
\raggedbottom

\begin{abstract}
We consider coercive evolution equations with measure-valued delay on a Gelfand triple.  The delayed feedback is induced by a bounded form on $V$ and may contain principal spatial derivatives, so it is naturally $V^*$-valued rather than bounded on the pivot space.  For every finite signed Borel kernel with no atom at the origin, an exponential weight makes the causal history operator contractive relative to the coercive parabolic solution operator.  This yields finite-time well-posedness without a smallness condition on the total variation and without any positivity assumption on the delayed form.  An asymmetric residual identity gives a total-variation Lipschitz estimate for signed kernels and strong stability for non-negative retarded kernels under narrow, equivalently weak, convergence of finite measures.  A delayed-diffusion realisation shows that the framework genuinely covers principal-part delays, and narrow stability yields qualitative distributed-to-discrete convergence at every fixed positive lag.
\end{abstract}

\maketitle

\section{Introduction}\label{sec:introduction}
Let $\Tend>0$ be a final time and let $\tau>0$ be a maximal delay.  We study
\begin{subequations}\label{eq:intro-history}
\begin{align}
\partial_tu+A_0u+\int_{[0,\tau]}A_1\widetilde u(t-s)\,\dd\mu(s)&=f,
\quad 0<t<\Tend,\label{eq:intro-history=a}\\
u(0)&=u_0,\label{eq:intro-history=b}
\end{align}
\end{subequations}
where $\widetilde u$ joins the unknown trajectory on $(0,\Tend)$ to a prescribed past on $(-\tau,0)$ and the finite signed Borel measure $\mu$ specifies the delay law.  This formulation includes distributed, singular continuous, and atomic delays.

The main analytical difficulty is that the delayed feedback need not be bounded on the pivot space.  We work on a Gelfand triple $V\hookrightarrow H\cong H^*\hookrightarrow V^*$ and assume that $A_0,A_1:V\to V^*$ are generated by bounded bilinear forms, with the instantaneous form coercive.  Thus $A_1$ may contain the same spatial derivatives as the principal diffusion operator.  Principal-part memory of this kind is classical in heat conduction with memory and in viscoelasticity, where the constitutive flux depends on the history of the temperature gradient or the strain~\cite{GurtinPipkin1968,Miller1978,Dafermos1970}.  In the delayed-diffusion model, the variational Laplacian acts from $H_0^1(\Omega)$ to $H^{-1}(\Omega)$ and has no bounded extension from $L^2(\Omega)$ to $H^{-1}(\Omega)$.  The prescribed history is therefore taken only in $L^2(-\tau,0;V)$, and the measure-valued history term must be defined without assigning point values to an $L^2$-equivalence class.

Abstract delay semigroups, Volterra equations, and exponential weights in causal evolution equations are classical \cite{Gripenberg1990,Pruss1993,HaleVerduynLunel1993,Wu1996,BatkaiPiazzera2005,TravisWebb1974}.  Continuous dependence on delay parameters for parabolic equations is studied in \cite{KryspinMierczynski2023,KryspinMierczynski2024}, while measure-valued delay kernels arise also in parabolic control \cite{CasasMateosTroeltzsch2018}.

Earlier versions of the present work, arXiv:2602.19099v1 (February 2026) and v2 (April 2026), introduced the measure-valued formulation and contained preliminary finite-time and kernel-continuity arguments.  The present version corrects the former a priori analysis and replaces it by the weighted causal estimate developed below.  It also gives a representative-independent formulation for atomic kernels with square-integrable histories and establishes residual, total-variation, and narrow kernel stability at the natural energy regularity.

Subsequently, Shikhman \cite{Shikhman2026} studied a semilinear reaction--diffusion equation with finite signed delay measures in the continuous-history phase space $\mathcal C([-\tau,0];L^2(\Omega))$, including well-posedness, kernel continuity, and long-time dynamics.  The analysis in \cite{Shikhman2026} is based on the earlier arXiv version as it stood at the time.  The two approaches address complementary functional-analytic regimes: Shikhman's setting uses continuous pivot-space histories, whereas the present article treats form-valued delayed operators $A_1:V\to V^*$ and prescribed histories in $L^2(-\tau,0;V)$.

The contributions are the following.  The measure-valued history operator is defined intrinsically on square-integrable histories, so that atomic delays act on $L^2$-equivalence classes without point evaluation.  In the energy-space regime the delayed operator admits no bounded extension to the pivot space (\Cref{prop:not-H-bounded}), so histories must be taken in $L^2(-\tau,0;V)$; this is what separates form-valued, principal-part delays from the continuous pivot-space histories appropriate to reaction delays.  An asymmetric residual identity then gives a total-variation Lipschitz estimate for signed measures and strong stability under narrow convergence for non-negative kernels, with no sign condition on the delayed form $a_1$ and no smallness condition on the kernel mass.  The weighted well-posedness theory of \Cref{sec:weighted} is the foundational vehicle for these results rather than an end in itself.

The well-posedness is obtained from an exponentially weighted norm and the associated variation mass.  For a real interval $I$ and a Hilbert space $X$, we set
\begin{align}
  \norm{g}_{L^2_\beta(I;X)}
  &:=\left(\int_I e^{-2\beta t}\norm{g(t)}_X^2\,\dd t\right)^{1/2},
  \label{eq:intro-weighted-norm}\\
  \kappa_\mu(\beta)
  &:=\int_{[0,\tau]}e^{-\beta s}\,\dd|\mu|(s),
  \quad \beta\ge0.\label{eq:kappa}
\end{align}
If $|\mu|(\{0\})=0$, then $\kappa_\mu(\beta)\to0$ as $\beta\to\infty$.  The causal history operator consequently becomes a strict perturbation of the coercive parabolic solution operator in a suitable exponentially weighted norm.  This proves finite-time well-posedness for any fixed signed retarded kernel without requiring $\|\mu\|_{\M([0,\tau])}$ to be small.

The stability theory is built on an asymmetric residual identity.  For solutions with common data, the kernel on the left determines the weighted resolvent margin, while the kernel on the right supplies the reference trajectory along which the perturbation is evaluated.  This yields an asymmetric Lipschitz estimate in total variation for signed kernels.  For non-negative kernels, translation continuity of a fixed joined trajectory also gives strong solution convergence under narrow convergence of measures. Because absolutely continuous approximations of an atom remain at total-variation distance twice their mass, the narrow topology is the natural qualitative topology for distributed-to-discrete concentration.

Robustness of the solution with respect to the delay law is, moreover, the analytic prerequisite for two control-theoretic questions to which the present linear theory is a natural first step: the identification of a memory kernel from observations of the state, and the analysis of feedback acting through distributed or discrete delay.  Neither problem is treated here, but the total-variation and narrow stability estimates below are of the kind such questions require.

The paper is organised as follows.  \Cref{sec:setting,sec:weighted} develop the measure-valued history operator and weighted well-posedness.  \Cref{sec:pde-realisation} treats delayed diffusion fluxes and explains the failure of pivot-space boundedness.  \Cref{sec:stability} proves residual, total-variation, and narrow kernel stability and concludes with qualitative distributed-to-discrete convergence at a fixed positive lag.

\section{Functional setting and measure-valued histories}\label{sec:setting}
Let $V$ and $H$ be real separable Hilbert spaces such that $V\hookrightarrow H$ continuously and densely.  Identifying $H$ with its dual gives the Gelfand triple
\begin{align*}
V\hookrightarrow H\cong H^*\hookrightarrow V^*.
\end{align*}
The duality pairing between $V^*$ and $V$ is denoted by $\pair{\cdot}{\cdot}$, and $\mathcal L(X,Y)$ denotes the space of bounded linear operators from a Banach space $X$ to a Banach space $Y$.

For an interval $I\subset\R$, a Banach space $X$, and $1 \leq p \leq \infty$, the Bochner spaces are denoted by $L^p(I;X)$.  If $g\in L^2(I;X)$, its equivalence class is not assigned point values unless a continuous representative is known.  This elementary point matters for the past history: the condition $\psi(0)=u_0$ is not meaningful for a general $L^2(-\tau,0;V)$ function.

Let $\M([0,\tau])$ be the Banach space of finite signed Borel measures on $[0,\tau]$, with total-variation norm
\begin{align*}
  \norm{\sigma}_{\M([0,\tau])}:=|\sigma|([0,\tau]).
\end{align*}
Its positive cone is $\M_+([0,\tau])$.  We also set
\begin{align*}
  \M^0([0,\tau])
  &:=\{\sigma\in\M([0,\tau]):|\sigma|(\{0\})=0\},\\
  \M_+^0([0,\tau])
  &:=\M^0([0,\tau])\cap\M_+([0,\tau]).
\end{align*}
For any $\sigma\in\M([0,\tau])$, let $\sigma=\sigma^+-\sigma^-$ be its Jordan decomposition, so that $|\sigma|=\sigma^++\sigma^-$. If $X$ is a Banach space and $F:[0,\tau]\to X$ is strongly $|\sigma|$-measurable with
\begin{align*}
\int_{[0,\tau]}\norm{F(s)}_X\,\dd|\sigma|(s)<\infty,
\end{align*}
we define
\begin{align*}
  \int_{[0,\tau]}F\,\dd\sigma
  :=
  \int_{[0,\tau]}F\,\dd\sigma^+
  -
  \int_{[0,\tau]}F\,\dd\sigma^-,
\end{align*}
where the two integrals on the right are Bochner integrals. The well-posedness and total-variation stability results allow arbitrary signed retarded kernels $\sigma\in\M^0([0,\tau])$. Non-negativity is required only in the section on narrow convergence.

On the compact interval $[0,\tau]$, narrow convergence of non-negative measures means
\begin{align*}
\mu_n\rightharpoonup\mu
\quad \Longleftrightarrow \quad
\int\varphi\,\dd\mu_n\longrightarrow\int\varphi\,\dd\mu \quad\text{for any }\varphi\in \mathcal{C}([0,\tau]).
\end{align*}
It implies convergence, and hence uniform boundedness, of the total masses by testing with $\varphi\equiv1$.

Let $a_0,a_1:V\times V\to\R$ be bilinear forms.  We assume that there exist constants $\Lambda_0,\Lambda_1,\alpha_0>0$ such that
\begin{align}
\abs{a_i(w,v)} &\leq \Lambda_i\norm{w}_V\norm{v}_V, \quad w,v\in V,\quad i\in\{0,1\}, \label{eq:form-bounded}\\
a_0(v,v) &\geq \alpha_0\norm{v}_V^2, \quad v\in V. \label{eq:a0-coercive}
\end{align}
No symmetry or sign condition is imposed on $a_1$.  Let $A_i\in\mathcal L(V,V^*)$ be defined by
\begin{align*}
  \pair{A_iw}{v}=a_i(w,v).
\end{align*}

\begin{example}[Diffusion realisation]\label[example]{ex:diffusion}
Let $\Omega\subset\R^d$, $d\in\{1,2,3\}$, be a bounded Lipschitz domain.  We set
$H=L^2(\Omega)$ and $V=H_0^1(\Omega)$.  Let
$B_0,B_1\in L^\infty(\Omega;\R^{d\times d})$, and assume that there exists $\lambda_0>0$ such that
\begin{align*}
  \xi^\top B_0(x)\xi\geq\lambda_0\abs{\xi}^2
  \quad\text{for every }\xi\in\R^d
  \text{ and for almost every }x\in\Omega.
\end{align*}
Then,
\begin{align*}
a_i(w,v)=\int_\Omega B_i(x)\nabla w\cdot\nabla v\,\dd x
\end{align*}
satisfies \eqref{eq:form-bounded}--\eqref{eq:a0-coercive}.  In general, $A_1:V\to V^*$ does not admit a bounded extension from $H$ into $V^*$; this is precisely the energy-space regime considered below.
\end{example}

\begin{example}[Kernel classes]\label[example]{ex:kernel-classes}
The following kernels all belong to the measure framework.  For $r\in[0,\tau]$, the symbol $\delta_r$ denotes the unit Dirac measure at $r$.
\begin{enumerate}
\item An absolutely continuous distributed memory has $\dd\mu(s)=k(s)\,\dd s$ with $k\in L^1(0,\tau)$, $k\ge0$.
\item A finite multiple delay is $\mu=\sum_{j=1}^Jm_j\delta_{\tau_j}$, where $m_j\in\R\setminus\{0\}$ and $0<\tau_j\le\tau$.
\item A countable atomic delay is $\mu=\sum_{j\ge1}m_j\delta_{\tau_j}$ with $\sum_j|m_j|<\infty$.  Accumulation of the delay locations inside $(0,\tau]$ is allowed.
\item A mixed kernel is the sum of an absolutely continuous component, a singular continuous component, and an atomic component.  No decomposition of this kind is used in the proofs.
\end{enumerate}
The memory-free problem corresponds to $\mu=0$.
\end{example}

Recall that $\tau>0$ is a delay horizon and $\Tend>0$ is a final time. The condition $|\mu|(\{0\})=0$ isolates genuinely retarded terms.  An atom at zero is instantaneous and may instead be incorporated into $a_0$; see \Cref{rem:atom-zero}.

The following lemma makes the measure-valued history operator precise before it is used in the weak formulation.  In particular, it treats atomic kernels without assigning point values to an $L^2$-equivalence class.

\begin{lemma}[Well-definedness of the measure-valued history operator]
\label[lemma]{lem:measure-convolution}
Let $\sigma\in\M([0,\tau])$ and $g\in L^2(-\tau,\Tend;V)$.  Choose a Borel strongly measurable representative of $g$ and extend it by zero to $\R$.  Then, the Bochner integral
\begin{align}\label{eq:full-history-operator}
  (\K_\sigma g)(t)
  :=\int_{[0,\tau]}A_1g(t-s)\,\dd\sigma(s)
\end{align}
exists in $V^*$ for almost every $t\in(0,\Tend)$ and defines an element of the space $L^2(0,\Tend;V^*)$.  Furthermore,
\begin{align}\label{eq:measure-convolution-bound}
  \norm{\K_\sigma g}_{L^2(0,\Tend;V^*)}
  \leq
  \Lambda_1\norm{\sigma}_{\M([0,\tau])}
  \norm{g}_{L^2(-\tau,\Tend;V)}.
\end{align}
The element $\K_\sigma g\in L^2(0,\Tend;V^*)$ is independent of the chosen Borel strongly measurable representative of $g$. More generally, it is unchanged under any modification on a Lebesgue-null subset of $\mathbb R$, provided that the modified function remains Borel strongly measurable.
\end{lemma}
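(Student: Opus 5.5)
The plan is to reduce everything to the non-negative measures $\sigma^\pm$ and a Fubini–Tonelli argument on $\R\times[0,\tau]$. By the definition of the signed integral, $\K_\sigma g=\K_{\sigma^+}g-\K_{\sigma^-}g$, and $\norm{\sigma^+}+\norm{\sigma^-}=\norm{\sigma}_{\M([0,\tau])}$. So it suffices to treat a non-negative finite Borel measure $\nu$ and to prove the estimate with $\norm{\nu}$ in place of $\norm{\sigma}$; the triangle inequality then gives \eqref{eq:measure-convolution-bound}. Write $\bar g$ for the chosen Borel strongly measurable representative extended by zero. The map $(t,s)\mapsto t-s$ is continuous, so $(t,s)\mapsto A_1\bar g(t-s)$ is Borel strongly measurable from $\R\times[0,\tau]$ into $V^*$. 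It is a composition of the bounded operator $A_1$ with a Borel strongly measurable map, and the range is separable because $V$ is separable. In particular, for each fixed $t$ the map $s\mapsto A_1\bar g(t-s)$ is strongly $\nu$-measurable.

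For existence almost everywhere and the $L^2$ bound, I would apply Tonelli to the non-negative Borel function $(t,s)\mapsto\norm{\bar g(t-s)}_V$ on $(0,\Tend)\times[0,\tau]$ with the product measure $\dd t\otimes\nu$. The Cauchy–Schwarz inequality with respect to $\nu$ gives
\begin{align*}
\Bigl(\int_{[0,\tau]}\norm{\bar g(t-s)}_V\,\dd\nu(s)\Bigr)^2\le\nu([0,\tau])\int_{[0,\tau]}\norm{\bar g(t-s)}_V^2\,\dd\nu(s).
\end{align*}
Integrating over $t\in(0,\Tend)$ and swapping the integrals by Tonelli, the inner $t$-integral is at most $\norm{g}_{L^2(-\tau,\Tend;V)}^2$ for every $s\in[0,\tau]$, because $t-s$ ranges in $(-\tau,\Tend)$ and $\bar g$ vanishes outside that interval. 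The right-hand side is therefore at most $\norm{\nu}^2\norm{g}^2_{L^2(-\tau,\Tend;V)}$.

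Hence $\int\norm{\bar g(t-s)}_V\dd\nu(s)<\infty$ for a.e.\ $t$, and the Bochner integral exists there with norm at most $\Lambda_1$ times that scalar integral. Strong measurability of $t\mapsto(\K_\nu g)(t)$ follows from Fubini for Bochner integrals. Concretely, one can test against a countable norming set in $V$, which is available since $V$ is separable, and use Pettis' theorem together with scalar Fubini. The bound gives $\K_\nu g\in L^2(0,\Tend;V^*)$ with the stated constant.

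For independence of the representative, let $\bar g_1,\bar g_2$ be Borel strongly measurable and agree outside a Lebesgue-null set $N\subset\R$; this covers both the representative choice and the general modification claim. Applying the same Tonelli argument to $\norm{\bar g_1(t-s)-\bar g_2(t-s)}_V$, which vanishes unless $t-s\in N$, we find that for every $s$ the $t$-integral is zero, by translation invariance of Lebesgue measure. Thus $\int\norm{\bar g_1(t-s)-\bar g_2(t-s)}_V\dd|\sigma|(s)=0$ for a.e.\ $t$, so the two history integrals coincide for a.e.\ $t$.

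This is exactly where atoms are harmless. For a single $t$ the set $\{s:t-s\in N\}$ may carry $\nu$-mass, but only for a null set of $t$. I expect the main obstacle to be the joint measurability needed for Tonelli. The Borel, rather than merely Lebesgue, measurability of the representative is essential here, since the composition of a Lebesgue-measurable function with $(t,s)\mapsto t-s$ need not be $\dd t\otimes\nu$-measurable when $\nu$ is singular. That is why the statement insists on Borel representatives.
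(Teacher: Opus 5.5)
Your proposal is correct and follows essentially the paper's route: a Borel representative makes $(t,s)\mapsto A_1\bar g(t-s)$ jointly Borel, Fubini--Tonelli gives a.e.\ existence and strong measurability, a Young-type convolution bound gives the $L^2$ estimate, and Tonelli on the exceptional set $\{(t,s):t-s\in N\}$ gives representative independence. The only variation is that you split $\sigma=\sigma^+-\sigma^-$ and obtain the $L^2$ bound by Cauchy--Schwarz in the measure variable plus Tonelli, where the paper works with $|\sigma|$ and Minkowski's integral inequality; the two are equivalent. Your closing claim that Borel measurability is \emph{essential} is overstated, since with a Lebesgue-measurable representative the composition differs from the Borel one only on the $\dd t\otimes\nu$-null Borel set $\{(t,s):t-s\in N'\}$, so completed product measures would suffice, but this remark plays no role in your argument.
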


\begin{proof}
Because $V$ is separable, the element $g\in L^2(-\tau,\Tend;V)$ admits a Borel measurable representative, which is strongly measurable.  The map $(t,s)\mapsto g(t-s)$ is Borel measurable. Because $A_1\in\mathcal L(V,V^*)$, the map
\begin{align*}
  (t,s)\longmapsto A_1g(t-s)
\end{align*}
is Borel measurable and takes its values in the separable subspace $\overline{A_1(V)}\subset V^*$.  It is therefore strongly measurable on $(0,\Tend)\times[0,\tau]$. Furthermore, by the boundedness of $A_1$, the Cauchy--Schwarz inequality in time, and the finiteness of $|\sigma|$,
\begin{align*}
  &\int_{[0,\tau]}\int_0^{\Tend}
  \norm{A_1g(t-s)}_{V^*}\,\dd t\,\dd|\sigma|(s)\\
  &\quad\leq
  \Lambda_1\Tend^{1/2}
  \int_{[0,\tau]}
  \left(\int_0^{\Tend}\norm{g(t-s)}_V^2\,\dd t\right)^{1/2}
  \dd|\sigma|(s)\\
  &\quad\leq
  \Lambda_1\Tend^{1/2}
  \norm{\sigma}_{\M([0,\tau])}
  \norm{g}_{L^2(-\tau,\Tend;V)}<\infty.
\end{align*}
The Bochner--Fubini theorem therefore shows that the integral in \eqref{eq:full-history-operator} exists for almost every $t$. After setting $(\K_\sigma g)(t):=0$ on the exceptional Lebesgue-null set, this defines a strongly measurable $V^*$-valued function.

For almost every $t$, the variation estimate for the Bochner integral gives
\begin{align*}
  \norm{(\K_\sigma g)(t)}_{V^*}
  \leq
  \Lambda_1\int_{[0,\tau]}\norm{g(t-s)}_V\,\dd|\sigma|(s).
\end{align*}
Minkowski's integral inequality and the zero extension of $g$ yield
\begin{align*}
  \norm{\K_\sigma g}_{L^2(0,\Tend;V^*)}
  &\leq
  \Lambda_1\int_{[0,\tau]}
  \norm{g(\cdot-s)}_{L^2(0,\Tend;V)}\,\dd|\sigma|(s)\\
  &\leq
  \Lambda_1\norm{\sigma}_{\M([0,\tau])}
  \norm{g}_{L^2(-\tau,\Tend;V)},
\end{align*}
which proves \eqref{eq:measure-convolution-bound}.

It remains to prove independence of the representative and of the values assigned on Lebesgue-null sets.  Let $g_1,g_2:\mathbb R\to V$ be Borel strongly measurable functions
such that
\begin{align*}
  g_1=g_2
  \quad\text{Lebesgue-almost everywhere on }\mathbb R.
\end{align*}
This includes two Borel strongly measurable representatives of the same element of $L^2(-\tau,\Tend;V)$, extended by zero outside $(-\tau,\Tend)$, as well as any Borel strongly measurable modifications of these extensions on a Lebesgue-null subset of $\mathbb R$.  We set
\begin{align*}
  N:=\{r\in\mathbb R:
  \norm{g_1(r)-g_2(r)}_V>0\}.
\end{align*}
Then, $N$ is Borel and has Lebesgue measure zero.  The set
\begin{align*}
  E:=\{(t,s)\in(0,\Tend)\times[0,\tau]:t-s\in N\}
\end{align*}
is Borel.  For each fixed $s\in[0,\tau]$, its $t$-section is $(N+s)\cap(0,\Tend)$ and hence has Lebesgue measure zero. Tonelli's theorem therefore gives
\begin{align*}
0
&=\int_{[0,\tau]}\int_0^{\Tend} \mathbf 1_E(t,s)\,\dd t\,\dd|\sigma|(s) =\int_0^{\Tend}\int_{[0,\tau]} \mathbf 1_E(t,s)\,\dd|\sigma|(s)\,\dd t.
\end{align*}
Consequently, for almost every $t\in(0,\Tend)$ one has $g_1(t-s)=g_2(t-s)$ for $|\sigma|$-almost every $s\in[0,\tau]$.  Therefore,
\begin{align*}
  \int_{[0,\tau]}A_1g_1(t-s)\,\dd\sigma(s)
  =
  \int_{[0,\tau]}A_1g_2(t-s)\,\dd\sigma(s)
\end{align*}
for almost every $t\in(0,\Tend)$.  This proves the final assertion.
\end{proof}

Thus, for any $\sigma\in\M([0,\tau])$, the notation $\K_\sigma$ denotes the bounded linear operator
\begin{align*}
  \K_\sigma:
  L^2(-\tau,\Tend;V)
  &\longrightarrow L^2(0,\Tend;V^*),\\
  g&\longmapsto
  \left[
    t\longmapsto
    \int_{[0,\tau]}A_1g(t-s)\,\dd\sigma(s)
  \right],
\end{align*}
where $g$ is extended by zero outside $(-\tau,\Tend)$. Whenever $G\in L^2(\R;V)$, we use the same notation and set
\begin{align*}
  \K_\sigma G
  :=\K_\sigma\bigl(G|_{(-\tau,\Tend)}\bigr).
\end{align*}
Equivalently,
\begin{align*}
  (\K_\sigma G)(t)
  =
  \int_{[0,\tau]}A_1G(t-s)\,\dd\sigma(s)
\end{align*}
for almost every $t\in(0,\Tend)$.

We prescribe
\begin{align*}
  u_0\in H,\quad f\in L^2(0,\Tend;V^*),\quad
  \psi\in L^2(-\tau,0;V).
\end{align*}
At this regularity, $\psi(0)$ is not defined, and no compatibility condition at the single time $t=0$ is required for the basic well-posedness theory.  If $\psi$ admits a representative in $\mathcal{C}([-\tau,0];H)$, one may additionally impose the compatibility condition
\begin{align*}
  \psi(0)=u_0\quad\text{in }H.
\end{align*}
Such compatibility will be imposed later only when it is required by additional time regularity.

For a function $u:(0,\Tend)\to V$, we define the joined trajectory
\begin{align}\label{eq:joined-trajectory}
  \tilde u(t):=
  \begin{cases}
    \psi(t),&-\tau<t<0,\\
    u(t),&0<t<\Tend.
  \end{cases}
\end{align}
Values assigned to this auxiliary trajectory at the endpoints are immaterial for its $L^2$-equivalence class.

\begin{lemma}[Joined trajectory]\label[lemma]{lem:joined-trajectory}
Let $u\in L^2(0,\Tend;V)$ and $\psi\in L^2(-\tau,0;V)$.  The function $\tilde u$ defined by \eqref{eq:joined-trajectory} belongs to $L^2(-\tau,\Tend;V)$, and
\begin{align*}
  \norm{\tilde u}_{L^2(-\tau,\Tend;V)}^2
  =\norm{\psi}_{L^2(-\tau,0;V)}^2
   +\norm{u}_{L^2(0,\Tend;V)}^2.
\end{align*}
Changing the values assigned to the auxiliary joined trajectory at $-\tau$, $0$, or $\Tend$ does not change $\K_\sigma\tilde u$ in $L^2(0,\Tend;V^*)$ for any $\sigma\in\M([0,\tau])$.
\end{lemma}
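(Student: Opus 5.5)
The plan has three steps: build a Borel representative of the joined trajectory, compute its norm by splitting the interval, and reduce the endpoint claim to the representative-independence part of \Cref{lem:measure-convolution}.

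\textbf{Step 1: measurability.} Choose Borel strongly measurable representatives of $u$ on $(0,\Tend)$ and of $\psi$ on $(-\tau,0)$; these exist because $V$ is separable. Assign arbitrary values in $V$ at the three points $-\tau$, $0$, $\Tend$, for instance zero. Define
\begin{align*}
  \tilde u:=\mathbf 1_{(-\tau,0)}\psi+\mathbf 1_{(0,\Tend)}u
\end{align*}
off these points. A finite sum of Borel strongly measurable functions multiplied by indicators of Borel sets is again Borel strongly measurable. Hence $\tilde u$ is a Borel strongly measurable function on $[-\tau,\Tend]$.

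\textbf{Step 2: the norm identity.} Since $\{-\tau,0,\Tend\}$ is Lebesgue-null, split the integral of $\norm{\tilde u(t)}_V^2$ over $(-\tau,\Tend)$ into the integrals over $(-\tau,0)$ and $(0,\Tend)$. On each piece $\tilde u$ coincides with $\psi$ or $u$. This gives the stated identity, and finiteness of the right-hand side shows $\tilde u\in L^2(-\tau,\Tend;V)$. The resulting equivalence class does not depend on the chosen representatives of $u$ and $\psi$, because changing either one alters $\tilde u$ only on a null set.

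\textbf{Step 3: endpoint values.} Two assignments at $-\tau$, $0$, $\Tend$ produce two Borel strongly measurable functions $g_1,g_2$. After extension by zero to $\R$, they agree outside the finite set $\{-\tau,0,\Tend\}$, which is Lebesgue-null. The final assertion of \Cref{lem:measure-convolution} states that $\K_\sigma$ is unchanged under Borel strongly measurable modifications on a Lebesgue-null subset of $\R$. It therefore gives $\K_\sigma g_1=\K_\sigma g_2$ in $L^2(0,\Tend;V^*)$ for every $\sigma\in\M([0,\tau])$.

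\textbf{Main obstacle.} The one point needing care is Step 3 when $\sigma$ has atoms, e.g.\ $\sigma=\delta_r$. Then $(\K_\sigma\tilde u)(t)=A_1\tilde u(t-r)$ does depend on $\tilde u(0)$, but only at the single time $t=r$. Showing that this cannot matter amounts to the Tonelli argument over the Borel set $\{(t,s):t-s\in N\}$, and that argument was already carried out in \Cref{lem:measure-convolution}. So the difficulty reduces to confirming that the modified functions remain Borel strongly measurable, which holds because they differ only on a finite set.
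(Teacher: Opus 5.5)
Your proposal is correct and follows the paper's proof essentially step for step. You choose Borel strongly measurable representatives, split the integral at the null set $\{-\tau,0,\Tend\}$ to obtain the norm identity, and invoke the null-set-modification assertion of \Cref{lem:measure-convolution} for the endpoint values, whose Tonelli argument already covers the atomic case you flag.
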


\begin{proof}
We choose Borel strongly measurable representatives of $\psi$ and $u$. Assign an arbitrary value to $\tilde u(0)$ and define
\begin{align*}
\tilde u(t):=
\begin{cases}
\psi(t), & -\tau<t<0,\\
u(t),    & 0<t<\Tend.
\end{cases}
\end{align*}
Then, $\tilde u:(-\tau,\Tend)\to V$ is strongly measurable. Furthermore,
\begin{align*}
  \int_{-\tau}^{\Tend}\norm{\tilde u(t)}_V^2\,\dd t
  &=
  \int_{-\tau}^{0}\norm{\psi(t)}_V^2\,\dd t
  +
  \int_{0}^{\Tend}\norm{u(t)}_V^2\,\dd t\\
  &=
  \norm{\psi}_{L^2(-\tau,0;V)}^2
  +
  \norm{u}_{L^2(0,\Tend;V)}^2.
\end{align*}
Therefore, $\tilde u\in L^2(-\tau,\Tend;V)$ and the asserted norm identity holds. Changing the values assigned at $-\tau$, $0$, or $\Tend$ modifies the chosen representative or its extension only on a finite, and hence Lebesgue-null, subset of $\mathbb R$. Therefore, by \Cref{lem:measure-convolution}, such changes do not alter $\K_\sigma\tilde u$ as an element of $L^2(0,\Tend;V^*)$.
\end{proof}

We set
\begin{align*}
\W(0,\Tend)
  &:=
  \left\{
    u\in L^2(0,\Tend;V):
    \partial_tu\in L^2(0,\Tend;V^*)
  \right\},
\end{align*}
equipped with
\begin{align*}
\norm{u}_{\W(0,\Tend)}
  &:=
  \left(
    \norm{u}_{L^2(0,\Tend;V)}^2
    +
    \norm{\partial_tu}_{L^2(0,\Tend;V^*)}^2
  \right)^{1/2}.
\end{align*}
By the Lions--Magenes lemma \cite{LionsMagenes1972}, $\W(0,\Tend)\hookrightarrow \mathcal{C}([0,\Tend];H)$.

We consider
\begin{align}\label{eq:problem}
\partial_tu+A_0u+\K_\mu\tilde u=f \quad\text{in }V^*\text{ for a.e. }t\in(0,\Tend), \quad u(0)=u_0\quad\text{in }H.
\end{align}

\begin{definition}[Weak solution]\label[definition]{def:weak-solution}
Let $\mu\in\M^0([0,\tau])$.  A function $u\in\W(0,\Tend)$ is a weak solution of \eqref{eq:problem} if $u(0)=u_0$ in $H$ and
\begin{align*}
  \partial_tu(t)+A_0u(t)+\K_\mu\tilde u(t)=f(t)
  \quad\text{in }V^*
\end{align*}
for almost every $t\in(0,\Tend)$.  Equivalently, outside one Lebesgue-null set of times,
\begin{align*}
  \pair{\partial_tu(t)}{v}+a_0(u(t),v)
  +\int_{[0,\tau]}a_1(\tilde u(t-s),v)\,\dd\mu(s)
  =\pair{f(t)}{v}
\end{align*}
for any $v\in V$.  The values assigned to the auxiliary joined representative at individual times do not affect the history term as an element of $L^2(0,\Tend;V^*)$.  These arbitrary pointwise assignments are unrelated to the initial trace of $u$: if $u\in\W(0,\Tend)\hookrightarrow \mathcal{C}([0,\Tend];H)$, then $u(0)=u_0$ refers to the unique $H$-continuous representative of $u$.
\end{definition}

\begin{remark}[An atom at the origin]\label[remark]{rem:atom-zero}
Let $\hat\mu$ be a finite signed measure on $[0,\tau]$.  We set
\begin{align*}
  m_0:=\hat\mu(\{0\}),
  \qquad
  \mu:=\hat\mu-m_0\delta_0.
\end{align*}
Then,
\begin{align*}
  \hat\mu=m_0\delta_0+\mu,
  \qquad
  |\mu|(\{0\})=0.
\end{align*}
For almost every $t\in(0,\Tend)$ and any $v\in V$,
\begin{align*}
  \left\langle\K_{\hat\mu}\tilde u(t),v\right\rangle
  =
  m_0a_1\bigl(u(t),v\bigr)
  +
  \left\langle\K_\mu\tilde u(t),v\right\rangle.
\end{align*}
Thus, the atom at the origin may be absorbed into the
instantaneous form by setting
\begin{align*}
  \hat a_0:=a_0+m_0a_1.
\end{align*}
The same well-posedness theory applies provided that there exists $\hat\alpha_0>0$ such that
\begin{align*}
  \hat a_0(v,v)
  =
  a_0(v,v)+m_0a_1(v,v)
  \geq
  \hat\alpha_0\norm{v}_V^2
  \quad\text{for all }v\in V.
\end{align*}
Under the standing coercivity assumption on $a_0$, the condition
\begin{align*}
  m_0a_1(v,v)\geq0
  \quad\text{for all }v\in V
\end{align*}
is sufficient, but not necessary.  In particular, if $m_0\geq0$, the non-negativity of $a_1$ is sufficient.  We therefore formulate the genuinely retarded part of the well-posedness theory for $\M^0([0,\tau])$; positivity is imposed later only where it is needed for narrow convergence.
\end{remark}

\section{Weighted history estimates and well-posedness}
\label{sec:weighted}
This section establishes the weighted history estimates used to prove finite-time well-posedness.  After separating the causal history of the unknown trajectory from the contribution of the prescribed prehistory, we show that the causal operator has weighted norm at most $\Lambda_1\kappa_\mu(\beta)$.  For a genuinely retarded measure, $\kappa_\mu(\beta)\to0$ as $\beta\to\infty$, which permits the history term to be absorbed into the coercive instantaneous estimate and removes any smallness requirement on the total variation of $\mu$.

Unless otherwise stated, throughout this section
\begin{align*}
  \mu\in\M^0([0,\tau]).
\end{align*}

\subsection{The weighted norm and the weighted variation mass}
We use the weighted norm and the quantity $\kappa_\mu(\beta)$ defined in \eqref{eq:intro-weighted-norm} and \eqref{eq:kappa}, respectively. Because $|\mu|(\{0\})=0$, dominated convergence gives 
\begin{align}\label{eq:kappa-decay}
\kappa_\mu(\beta)\longrightarrow 0 \quad \text{as } \beta\to\infty.
\end{align}

\begin{lemma}[Elementary properties of the weighted norm]
\label[lemma]{lem:weighted-equivalence}
For any $\beta\geq0$, any Hilbert space $X$, and any strongly measurable function $g:(0,\Tend)\to X$,
\begin{align}\label{eq:weighted-equivalence}
  e^{-\beta\Tend}\norm{g}_{L^2(0,\Tend;X)}
  \leq
  \norm{g}_{L^2_\beta(0,\Tend;X)}
  \leq
  \norm{g}_{L^2(0,\Tend;X)},
\end{align}
where the inequalities are understood in the extended non-negative reals.  Consequently,
\begin{align*}
  L^2_\beta(0,\Tend;X)=L^2(0,\Tend;X)
\end{align*}
as sets, and the two norms are equivalent on any fixed finite interval.
\end{lemma}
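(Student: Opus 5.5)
The argument is a pointwise bound on the weight, integrated against the non-negative measurable function $\norm{g(t)}_X^2$. For $t\in(0,\Tend)$ and $\beta\ge0$ we have $0\le\beta t\le\beta\Tend$, so
\begin{align*}
  e^{-2\beta\Tend}\le e^{-2\beta t}\le 1 .
\end{align*}
Since $g$ is strongly measurable, $t\mapsto\norm{g(t)}_X^2$ is a non-negative measurable function with values in $[0,\infty]$. Multiplying the displayed inequality by it and integrating over $(0,\Tend)$ uses only monotonicity of the Lebesgue integral for non-negative functions. This gives
\begin{align*}
  e^{-2\beta\Tend}\int_0^{\Tend}\norm{g(t)}_X^2\dd t
  \le\int_0^{\Tend}e^{-2\beta t}\norm{g(t)}_X^2\dd t
  \le\int_0^{\Tend}\norm{g(t)}_X^2\dd t
\end{align*}
in $[0,\infty]$. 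Taking square roots, which is monotone on $[0,\infty]$ with $\sqrt{\infty}=\infty$, yields \eqref{eq:weighted-equivalence}. No finiteness is assumed, so the inequalities hold in the extended reals as stated.

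For the set equality, I read $L^2_\beta(0,\Tend;X)$ as the (equivalence classes of) strongly measurable functions with finite weighted norm. By \eqref{eq:weighted-equivalence}, $\norm{g}_{L^2_\beta}<\infty$ if and only if $\norm{g}_{L^2}<\infty$, because $e^{-\beta\Tend}>0$. Both spaces identify functions that agree almost everywhere, so they coincide as sets. On this common set, \eqref{eq:weighted-equivalence} gives norm equivalence with constants $e^{-\beta\Tend}$ and $1$. These constants depend only on $\beta$ and the finite length $\Tend$.

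There is no real obstacle here. The only point needing care is to apply monotonicity before assuming either side is finite, so that the statement holds in the extended non-negative reals. One should also note that it is the finiteness of $\Tend$ that keeps the lower constant positive.
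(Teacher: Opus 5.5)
Your proposal is correct and follows essentially the same route as the paper. Both arguments bound the weight pointwise by $e^{-2\beta\Tend}\le e^{-2\beta t}\le 1$, multiply by $\norm{g(t)}_X^2$, integrate, and take square roots, then read off the set equality and norm equivalence from the resulting two-sided inequality.
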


\begin{proof}
For any $t\in(0,\Tend)$,
\begin{align*}
  e^{-2\beta\Tend}
  \leq e^{-2\beta t}\leq1.
\end{align*}
Multiplying by $\norm{g(t)}_X^2$ and integrating over
$(0,\Tend)$ gives
\begin{align*}
  e^{-2\beta\Tend}
  \norm{g}_{L^2(0,\Tend;X)}^2
  \leq
  \norm{g}_{L^2_\beta(0,\Tend;X)}^2
  \leq
  \norm{g}_{L^2(0,\Tend;X)}^2.
\end{align*}
Taking square roots proves \eqref{eq:weighted-equivalence}.

Conversely, if $h\in L^2_\beta(0,\Tend;X)$, then
\begin{align*}
  \norm{h}_{L^2(0,\Tend;X)}
  \leq
  e^{\beta\Tend}
  \norm{h}_{L^2_\beta(0,\Tend;X)},
\end{align*}
so $h\in L^2(0,\Tend;X)$.  Therefore, the weighted and unweighted spaces coincide as sets, and their norms are equivalent.
\end{proof}

\begin{lemma}[Weighted variation mass and the instantaneous atom]\label[lemma]{lem:kappa-properties}
Let $\hat \mu\in\M([0,\tau])$, and define $\kappa_{\hat \mu}(\beta) :=\int_{[0,\tau]}e^{-\beta s}\,\dd|\hat \mu|(s)$. Then, $\beta\mapsto\kappa_{\hat \mu}(\beta)$ is non-increasing and continuous on $[0,\infty)$, and
\begin{align}\label{eq:kappa-atom-limit}
  \lim_{\beta\to\infty}\kappa_{\hat \mu}(\beta)
  =|\hat \mu|(\{0\}).
\end{align}
Consequently, the weighted variation mass can be made arbitrarily small precisely when the signed measure has no atom at the origin.
\end{lemma}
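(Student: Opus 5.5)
The plan is to work directly with the integrand $s\mapsto e^{-\beta s}$ against the finite non-negative measure $|\hat\mu|$ and to apply the standard convergence theorems. Every claim reduces to pointwise behaviour of this integrand, together with the single bound $0<e^{-\beta s}\le 1$ on $[0,\tau]$ for $\beta\ge0$. That bound makes the constant function $1$ an integrable majorant, since $|\hat\mu|([0,\tau])=\norm{\hat\mu}_{\M([0,\tau])}<\infty$.

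\emph{Monotonicity.} If $0\le\beta_1\le\beta_2$, then $e^{-\beta_2 s}\le e^{-\beta_1 s}$ for every $s\in[0,\tau]$, because $s\ge0$. Integrating this inequality against the non-negative measure $|\hat\mu|$ gives $\kappa_{\hat\mu}(\beta_2)\le\kappa_{\hat\mu}(\beta_1)$.

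\emph{Continuity.} Take any $\beta\ge0$ and any sequence $\beta_n\to\beta$ in $[0,\infty)$. For each $s$ we have $e^{-\beta_n s}\to e^{-\beta s}$, and the integrands are bounded by the $|\hat\mu|$-integrable constant $1$. Dominated convergence therefore gives $\kappa_{\hat\mu}(\beta_n)\to\kappa_{\hat\mu}(\beta)$, which is sequential continuity, and hence continuity, on $[0,\infty)$.

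\emph{The limit \eqref{eq:kappa-atom-limit}.} As $\beta\to\infty$, the function $e^{-\beta s}$ converges pointwise on $[0,\tau]$ to the indicator $\mathbf 1_{\{0\}}(s)$: it equals $1$ at $s=0$ for every $\beta$ and tends to $0$ for every $s>0$. Using the same majorant along an arbitrary sequence $\beta_n\to\infty$, dominated convergence gives
\begin{align*}
\lim_{\beta\to\infty}\kappa_{\hat\mu}(\beta)=\int_{[0,\tau]}\mathbf 1_{\{0\}}\,\dd|\hat\mu|=|\hat\mu|(\{0\}).
\end{align*}
Alternatively, monotone convergence applies, since the integrands decrease in $\beta$ and are bounded by an integrable function.

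\emph{The consequence.} Since $\kappa_{\hat\mu}$ is non-increasing, the identity above shows that $\kappa_{\hat\mu}(\beta)\ge|\hat\mu|(\{0\})$ for every $\beta\ge0$. Hence, if $|\hat\mu|(\{0\})>0$, the quantity $\kappa_{\hat\mu}(\beta)$ stays bounded below by that positive number and cannot be made arbitrarily small. Conversely, if $|\hat\mu|(\{0\})=0$, the limit is $0$, which recovers \eqref{eq:kappa-decay}.

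No step is a genuine obstacle. The only point needing care is to justify passing to the limit along the continuous parameter $\beta$, which is done sequentially as above. One should also note explicitly that the pointwise limit at $s=0$ is $1$ and not $0$; this is exactly where the atom at the origin survives in the limit.
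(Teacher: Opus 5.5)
Your proposal is correct and follows essentially the same route as the paper: pointwise monotonicity of $e^{-\beta s}$, dominated convergence with the constant majorant $1$ for both continuity and the limit $\mathbf 1_{\{0\}}$, and the lower bound $\kappa_{\hat\mu}(\beta)\ge|\hat\mu|(\{0\})$ (the paper phrases this as $\inf_{\beta\ge0}\kappa_{\hat\mu}(\beta)=|\hat\mu|(\{0\})$) for the final equivalence. Your explicit sequential handling of $\beta\to\infty$ and the remark that monotone convergence also applies are harmless refinements of the same argument.
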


\begin{proof}
For each $s\in[0,\tau]$, the function $\beta\mapsto e^{-\beta s}$ is non-increasing.  Therefore, $\beta\mapsto\kappa_{\hat\mu}(\beta)$ is non-increasing. 

To prove continuity, let $\beta_n\to\beta$ in $[0,\infty)$. Then,
\begin{align*}
  e^{-\beta_n s}\longrightarrow e^{-\beta s}
  \qquad\text{for every }s\in[0,\tau],
\end{align*}
and
\begin{align*}
  0\leq e^{-\beta_n s}\leq1.
\end{align*}
Because $|\hat\mu|$ is a finite positive measure, the dominated convergence theorem yields
\begin{align*}
  \kappa_{\hat\mu}(\beta_n)
  \longrightarrow
  \kappa_{\hat\mu}(\beta).
\end{align*}

Finally,
\begin{align*}
  e^{-\beta s}
  \longrightarrow
  \mathbf 1_{\{0\}}(s)
  \quad\text{as }\beta\to\infty.
\end{align*}
Another application of the dominated convergence theorem yields
\begin{align*}
  \lim_{\beta\to\infty}\kappa_{\hat\mu}(\beta)
  =
  \int_{[0,\tau]}\mathbf 1_{\{0\}}(s)\,\dd|\hat\mu|(s)
  =
  |\hat\mu|(\{0\}).
\end{align*}
Consequently,
\begin{align*}
  \inf_{\beta\geq0}\kappa_{\hat\mu}(\beta)
  =
  |\hat\mu|(\{0\}),
\end{align*}
so $\kappa_{\hat\mu}(\beta)$ can be made arbitrarily small if and only if $|\hat\mu|(\{0\})=0$, equivalently $\hat\mu(\{0\})=0$.
\end{proof}

\subsection{Measure convolution and causal splitting}
Let $\E_+w$ denote the zero extension of $w:(0,\Tend)\to V$ to $\R$, and let $\E_-\psi$ be the extension of $\psi:(-\tau,0)\to V$ by zero outside $(-\tau,0)$.  Because
\begin{align*}
\tilde u=\E_+u+\E_-\psi \quad \text{in }L^2(-\tau,\Tend;V),
\end{align*}
the bounded linearity of $\K_\mu$ established in \Cref{lem:measure-convolution} gives
\begin{align}\label{eq:history-splitting}
\K_\mu\tilde u
=\K_\mu^+u+h_{\mu,\psi} \quad\text{in }L^2(0,\Tend;V^*),
\end{align}
where
\begin{align*}
\K_\mu^+u:=\K_\mu(\E_+u), \quad h_{\mu,\psi}:=\K_\mu(\E_-\psi).
\end{align*}
Applying \eqref{eq:measure-convolution-bound} to $g=\E_-\psi$ and using $e^{-\beta t}\leq1$ on $(0,\Tend)$ yields
\begin{align}\label{eq:history-forcing-bound}
\norm{h_{\mu,\psi}}_{L^2_\beta(0,\Tend;V^*)}
\leq
\Lambda_1\norm{\mu}_{\M([0,\tau])} \norm{\psi}_{L^2(-\tau,0;V)}.
\end{align}

\subsection{The weighted causal estimate}
The following estimate is the key point of the paper.  The factor $e^{-\beta s}$ appears because the present-time weight $e^{-\beta t}$ is transported to the delayed time $t-s$.  Only the unknown causal part is estimated by the weighted variation mass. The prescribed history remains an external forcing.

\begin{lemma}[Weighted causal estimate]
\label[lemma]{lem:weighted-causal}
Let $\beta\geq0$.  For any $w\in L^2_\beta(0,\Tend;V)$,
\begin{align}\label{eq:weighted-causal}
\norm{\K_\mu^+w}_{L^2_\beta(0,\Tend;V^*)}
\leq
\Lambda_1\kappa_\mu(\beta) \norm{w}_{L^2_\beta(0,\Tend;V)}.
\end{align}
\end{lemma}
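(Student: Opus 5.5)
The estimate follows by transporting the weight inside the pointwise bound and then applying Minkowski's integral inequality in the weighted norm. First, by \Cref{lem:weighted-equivalence}, $w\in L^2_\beta(0,\Tend;V)$ means $w\in L^2(0,\Tend;V)$. Hence $\E_+w\in L^2(-\tau,\Tend;V)$, and \Cref{lem:measure-convolution} shows that $\K_\mu^+w=\K_\mu(\E_+w)$ is a well-defined element of $L^2(0,\Tend;V^*)$. The variation estimate for the Bochner integral, already used in the proof of \Cref{lem:measure-convolution}, gives for almost every $t\in(0,\Tend)$
\begin{align*}
  e^{-\beta t}\norm{(\K_\mu^+w)(t)}_{V^*}
  \leq
  \Lambda_1\int_{[0,\tau]}e^{-\beta s}\,e^{-\beta(t-s)}\norm{(\E_+w)(t-s)}_V\,\dd|\mu|(s).
\end{align*}
Here I simply split $e^{-\beta t}=e^{-\beta s}e^{-\beta(t-s)}$, which is exactly the transport of the present-time weight to the delayed time.

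Next I apply Minkowski's integral inequality in $L^2(0,\Tend)$ with respect to $\dd t$, taking the finite measure $|\mu|$ as the outer integral. The joint measurability of $(t,s)\mapsto e^{-\beta(t-s)}\norm{(\E_+w)(t-s)}_V$ is the same Borel measurability established in the proof of \Cref{lem:measure-convolution}. This yields
\begin{align*}
  \norm{\K_\mu^+w}_{L^2_\beta(0,\Tend;V^*)}
  \leq
  \Lambda_1\int_{[0,\tau]}e^{-\beta s}
  \left(\int_0^{\Tend}e^{-2\beta(t-s)}\norm{(\E_+w)(t-s)}_V^2\,\dd t\right)^{1/2}\dd|\mu|(s).
\end{align*}

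For fixed $s\in[0,\tau]$, substitute $r=t-s$. Because $\E_+w$ vanishes outside $(0,\Tend)$, the inner integral equals $\int_0^{\Tend-s}e^{-2\beta r}\norm{w(r)}_V^2\,\dd r$, which is at most $\norm{w}_{L^2_\beta(0,\Tend;V)}^2$. Causality enters at precisely this point: only $r>0$ contributes, so the prehistory is absent. Integrating the remaining factor $e^{-\beta s}$ against $|\mu|$ produces $\kappa_\mu(\beta)$ and hence \eqref{eq:weighted-causal}.

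The only delicate step is justifying Minkowski's inequality for nonnegative jointly measurable integrands against the product of Lebesgue measure and $|\mu|$. This follows from Tonelli's theorem, or by duality against $L^2(0,\Tend)$ test functions. Note that the hypothesis $|\mu|(\{0\})=0$ plays no role in the estimate itself; it matters only later, through \eqref{eq:kappa-decay}.
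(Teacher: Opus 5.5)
Your proof is correct and rests on the same identity $e^{-\beta t}=e^{-\beta s}e^{-\beta(t-s)}$ as the paper's. The paper packages the two factors into the reweighted measure $\mu_\beta:=e^{-\beta s}\mu$, whose total variation is $\norm{\mu_\beta}_{\M([0,\tau])}=\kappa_\mu(\beta)$, and the zero-extended reweighted function $w_\beta:=e^{-\beta t}w$. This gives $e^{-\beta t}\K_\mu^+w=\K_{\mu_\beta}w_\beta$, so \eqref{eq:measure-convolution-bound} applies directly, whereas you redo that lemma's variation estimate and Minkowski step by hand.
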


\begin{proof}
By \Cref{lem:weighted-equivalence},
$w\in L^2(0,\Tend;V)$.  Choose a Borel strongly measurable representative of $w$ and we set
\begin{align*}
w_\beta(t):=e^{-\beta t}w(t), \quad 0<t<\Tend.
\end{align*}
Extend $w_\beta$ by zero to $\mathbb R$.  Then,
\begin{align*}
  \norm{w_\beta}_{L^2(-\tau,\Tend;V)}
  &=
  \norm{w_\beta}_{L^2(0,\Tend;V)}
  =
  \norm{w}_{L^2_\beta(0,\Tend;V)}.
\end{align*}
We define a finite signed Borel measure $\mu_\beta$ on $[0,\tau]$ as
\begin{align*}
  \mu_\beta(B)
  :=
  \int_B e^{-\beta s}\,\dd\mu(s)
\end{align*}
for any Borel set $B\subset[0,\tau]$. Because $e^{-\beta s}\geq0$,
\begin{align*}
\dd|\mu_\beta|(s)
=
e^{-\beta s}\,\dd|\mu|(s),
\quad
\norm{\mu_\beta}_{\M([0,\tau])}
=
\kappa_\mu(\beta).
\end{align*}
For almost every $t\in(0,\Tend)$,
\begin{align*}
e^{-\beta t}(\K_\mu^+w)(t)
&=
\int_{[0,\tau]} e^{-\beta t}A_1w(t-s)\,\dd\mu(s)
=
\int_{[0,\tau]} e^{-\beta s}A_1w_\beta(t-s)\,\dd\mu(s)\\
&= (\K_{\mu_\beta}w_\beta)(t),
\end{align*}
where the zero extensions are used when $t-s\leq0$.  Therefore, by \Cref{lem:measure-convolution},
\begin{align*}
\norm{\K_\mu^+w}_{L^2_\beta(0,\Tend;V^*)}
&= \norm{\K_{\mu_\beta}w_\beta}_{L^2(0,\Tend;V^*)} \\
&\leq
\Lambda_1 \norm{\mu_\beta}_{\M([0,\tau])} \norm{w_\beta}_{L^2(-\tau,\Tend;V)}
= \Lambda_1\kappa_\mu(\beta) \norm{w}_{L^2_\beta(0,\Tend;V)},
\end{align*}
which proves \eqref{eq:weighted-causal}.
\end{proof}

\subsection{The coercive parabolic solution operator}
We introduce the precise weighted estimate for the unperturbed problem. The exponential change of unknown converts the weighted estimate into the standard energy estimate for a coercive parabolic equation with an additional non-negative pivot-space term.

\begin{lemma}[Weighted parabolic estimate]\label[lemma]{lem:weighted-parabolic}
Let $g\in L^2_\beta(0,\Tend;V^*)$ and $y_0\in H$.  The problem
\begin{align*}
\partial_ty+A_0y=g,\quad y(0)=y_0,
\end{align*}
has a unique solution $y\in\W(0,\Tend)$.  With
\begin{align*}
  \mathcal R_\beta(y_0,g)
  :=\norm{y_0}_H^2
    +\alpha_0^{-1}\norm{g}_{L^2_\beta(0,\Tend;V^*)}^2,
\end{align*}
one has the two separate estimates
\begin{align}
\sup_{0\le t\le \Tend}e^{-2\beta t}\norm{y(t)}_H^2
&\leq \mathcal R_\beta(y_0,g), \label{eq:weighted-parabolic-sup}\\
\alpha_0\norm{y}_{L^2_\beta(0,\Tend;V)}^2
&\leq \mathcal R_\beta(y_0,g). \label{eq:weighted-parabolic-L2}
\end{align}
Consequently,
\begin{align}\label{eq:weighted-parabolic-energy}
\sup_{0\le t\le \Tend}e^{-2\beta t}\norm{y(t)}_H^2 +\alpha_0\norm{y}_{L^2_\beta(0,\Tend;V)}^2 \leq 2\mathcal R_\beta(y_0,g).
\end{align}
If $y_0=0$, then
\begin{align}\label{eq:parabolic-Lipschitz}
\norm{y}_{L^2_\beta(0,\Tend;V)} \leq \alpha_0^{-1}\norm{g}_{L^2_\beta(0,\Tend;V^*)}.
\end{align}
\end{lemma}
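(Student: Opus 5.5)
Existence and uniqueness of $y\in\W(0,\Tend)$ follow from the classical Lions theorem for coercive, bounded forms on a Gelfand triple. By \Cref{lem:weighted-equivalence}, $g\in L^2(0,\Tend;V^*)$, so the unweighted theory applies directly; no smallness or time-step argument is needed. The substance of the lemma is therefore the weighted estimate, which I would obtain by the exponential change of unknown announced before the statement. Set $z(t):=e^{-\beta t}y(t)$. Then $z\in\W(0,\Tend)$ with $\partial_t z=e^{-\beta t}\partial_t y-\beta z$, and
\begin{align*}
\partial_t z+\beta z+A_0z=e^{-\beta t}g=:g_\beta,\qquad z(0)=y_0 .
\end{align*}
Here $\norm{g_\beta}_{L^2(0,\Tend;V^*)}=\norm{g}_{L^2_\beta(0,\Tend;V^*)}$ and $\norm{z}_{L^2(0,\Tend;V)}=\norm{y}_{L^2_\beta(0,\Tend;V)}$.

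Next I would test with $z$, using the Lions--Magenes identity $\frac{\dd}{\dd t}\norm{z}_H^2=2\pair{\partial_tz}{z}$, valid for $z\in\W(0,\Tend)$. For a.e.\ $t$ this gives
\begin{align*}
\tfrac12\tfrac{\dd}{\dd t}\norm{z}_H^2+\beta\norm{z}_H^2+\alpha_0\norm{z}_V^2\le\norm{g_\beta}_{V^*}\norm{z}_V\le \tfrac{1}{2\alpha_0}\norm{g_\beta}_{V^*}^2+\tfrac{\alpha_0}{2}\norm{z}_V^2 .
\end{align*}
Since $\beta\ge0$, the pivot-space term can be dropped. Multiplying by $2$ and integrating over $(0,t)$ yields
\begin{align*}
\norm{z(t)}_H^2+\alpha_0\int_0^t\norm{z}_V^2\le \norm{y_0}_H^2+\alpha_0^{-1}\norm{g_\beta}_{L^2(0,\Tend;V^*)}^2=\mathcal R_\beta(y_0,g)
\end{align*}
for every $t\in[0,\Tend]$. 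Each of the two non-negative summands on the left is then bounded separately by $\mathcal R_\beta(y_0,g)$. Taking the supremum over $t$ in the first summand gives \eqref{eq:weighted-parabolic-sup}, and taking $t=\Tend$ in the second gives \eqref{eq:weighted-parabolic-L2}. Adding the two bounds gives \eqref{eq:weighted-parabolic-energy}, which explains the factor $2$ there.

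For \eqref{eq:parabolic-Lipschitz} with $y_0=0$, I would not use Young's inequality. Integrating the unsplit inequality $\tfrac12\tfrac{\dd}{\dd t}\norm{z}_H^2+\alpha_0\norm{z}_V^2\le\norm{g_\beta}_{V^*}\norm{z}_V$ over $(0,\Tend)$ and applying Cauchy--Schwarz in time gives
\begin{align*}
\alpha_0\norm{z}_{L^2(0,\Tend;V)}^2\le \norm{g_\beta}_{L^2(0,\Tend;V^*)}\norm{z}_{L^2(0,\Tend;V)} .
\end{align*}
Dividing by $\norm{z}_{L^2(0,\Tend;V)}$ (the case $z=0$ being trivial) gives the constant $\alpha_0^{-1}$, rather than the $\alpha_0^{-1}\sqrt{\cdot}$-type constant that \eqref{eq:weighted-parabolic-L2} alone would produce.

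The main point requiring care is the rigorous justification of the change of variables inside $\W(0,\Tend)$. One must check that multiplication by the smooth bounded function $e^{-\beta t}$ preserves $\W(0,\Tend)$ and obeys the product rule for the distributional derivative. One must also check that the continuous $H$-representative of $z$ equals $e^{-\beta t}$ times that of $y$, so that $\sup_t e^{-2\beta t}\norm{y(t)}_H^2=\sup_t\norm{z(t)}_H^2$ and $z(0)=y_0$. Both are routine verifications against test functions in $\mathcal C_c^\infty(0,\Tend)$.
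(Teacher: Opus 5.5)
Your proposal is correct and follows the paper's argument: the exponential change of unknown $z=e^{-\beta t}y$, testing with $z$, coercivity plus Young's inequality, and reading off the supremum and $L^2$ bounds separately. The only organisational difference is that you obtain existence for $y$ directly from Lions' theorem and transform afterwards, whereas the paper first solves the shifted problem $\partial_tz+A_0z+\beta Jz=g_\beta$ and then sets $y=e^{\beta t}z$.

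Your Cauchy--Schwarz detour for \eqref{eq:parabolic-Lipschitz} is harmless but unnecessary, and the remark motivating it is mistaken. With $y_0=0$, \eqref{eq:weighted-parabolic-L2} reads $\alpha_0\norm{y}_{L^2_\beta(0,\Tend;V)}^2\le\alpha_0^{-1}\norm{g}_{L^2_\beta(0,\Tend;V^*)}^2$, which gives exactly the constant $\alpha_0^{-1}$ after taking square roots; this is how the paper concludes.
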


\begin{proof}
We divide the proof into the existence argument and the weighted energy estimate.

First, we set
\begin{align*}
g_\beta(t):=e^{-\beta t}g(t).
\end{align*}
Then,
\begin{align*}
g_\beta\in L^2(0,\Tend;V^*),
\quad
\norm{g_\beta}_{L^2(0,\Tend;V^*)}
= \norm{g}_{L^2_\beta(0,\Tend;V^*)}.
\end{align*}
Let $J\in\mathcal L(V,V^*)$ be the pivot-space operator defined as
\begin{align*}
\pair{Jv}{w}:=(v,w)_H, \quad v,w\in V.
\end{align*}
By the continuous embedding $V\hookrightarrow H$, the bilinear form
\begin{align*}
  a_\beta(v,w):=a_0(v,w)+\beta(v,w)_H
\end{align*}
is bounded on $V\times V$. Furthermore, because $\beta\geq0$,
\begin{align*}
a_\beta(v,v)
=
a_0(v,v)+\beta\norm{v}_H^2
\geq
\alpha_0\norm{v}_V^2 \quad  \forall v\in V.
\end{align*}
Therefore, the standard theory of coercive parabolic problems on a Gelfand triple~\cite{Showalter1997,LionsMagenes1972} gives a unique $z\in\W(0,\Tend)$ satisfying
\begin{align}\label{eq:transformed-parabolic}
\partial_tz+A_0z+\beta Jz=g_\beta, \quad z(0)=y_0.
\end{align}
We return to the original unknown.  We define
\begin{align*}
y(t):=e^{\beta t}z(t).
\end{align*}
Multiplication by $e^{\beta t}$ is an automorphism of $\W(0,\Tend)$, so $y\in\W(0,\Tend)$.  Furthermore, in $L^2(0,\Tend;V^*)$,
\begin{align*}
\partial_ty = e^{\beta t}\bigl(\partial_tz+\beta Jz\bigr).
\end{align*}
Because $A_0y=e^{\beta t}A_0z$, equation \eqref{eq:transformed-parabolic} gives
\begin{align*}
\partial_ty+A_0y
&=
e^{\beta t}
\left(\partial_tz+\beta Jz+A_0z\right) = e^{\beta t}g_\beta = g.
\end{align*}
Also $y(0)=z(0)=y_0$.  Conversely, the transformation $z(t)=e^{-\beta t}y(t)$ maps any solution of the original problem to a solution of \eqref{eq:transformed-parabolic}.  The uniqueness of $z$ therefore implies the uniqueness of $y$.

It remains to prove the estimates.  Testing \eqref{eq:transformed-parabolic} by $z(t)$ gives, for almost every $t\in(0,\Tend)$,
\begin{align*}
\frac12\frac{\dd}{\dd t}\norm{z(t)}_H^2 +a_0 \left(z(t),z(t) \right) +\beta\norm{z(t)}_H^2
 = \pair{g_\beta(t)}{z(t)}.
\end{align*}
Using the coercivity of $a_0$ and Young's inequality,
\begin{align*}
\pair{g_\beta(t)}{z(t)}
&\leq
\norm{g_\beta(t)}_{V^*}\norm{z(t)}_V
\leq
\frac{1}{2\alpha_0}\norm{g_\beta(t)}_{V^*}^2
+
\frac{\alpha_0}{2}\norm{z(t)}_V^2,
\end{align*}
we have
\begin{align*}
\frac12\frac{\dd}{\dd t}\norm{z(t)}_H^2 +\beta\norm{z(t)}_H^2 +\frac{\alpha_0}{2}\norm{z(t)}_V^2
\leq \frac{1}{2\alpha_0}\norm{g_\beta(t)}_{V^*}^2.
\end{align*}
Integrating over $(0,t)$ and multiplying by two yield
\begin{align}\label{eq:z-energy}
\norm{z(t)}_H^2
&+ 2\beta\int_0^t\norm{z(r)}_H^2\,\dd r + \alpha_0\int_0^t\norm{z(r)}_V^2\,\dd r\\
&\leq
\norm{y_0}_H^2 + \alpha_0^{-1} \int_0^t\norm{g_\beta(r)}_{V^*}^2\,\dd r. \notag
\end{align}
For every $t\in[0,\Tend]$, discarding the two integral terms in \eqref{eq:z-energy} and enlarging the integral on the right-hand side to $(0,\Tend)$ give
\begin{align*}
\norm{z(t)}_H^2
\leq
\norm{y_0}_H^2 + \alpha_0^{-1} \norm{g_\beta}_{L^2(0,\Tend;V^*)}^2.
\end{align*}
Because
\begin{align*}
z(t)=e^{-\beta t}y(t),
\end{align*}
taking the supremum over $t\in[0,\Tend]$ proves \eqref{eq:weighted-parabolic-sup}. Next, setting $t=\Tend$ in \eqref{eq:z-energy} and discarding the other non-negative terms give
\begin{align*}
\alpha_0 \norm{z}_{L^2(0,\Tend;V)}^2
\leq
\norm{y_0}_H^2 + \alpha_0^{-1} \norm{g_\beta}_{L^2(0,\Tend;V^*)}^2.
\end{align*}
Because
\begin{align*}
\norm{z}_{L^2(0,\Tend;V)}
= \norm{y}_{L^2_\beta(0,\Tend;V)},
\end{align*}
this proves \eqref{eq:weighted-parabolic-L2}. Adding \eqref{eq:weighted-parabolic-sup} and \eqref{eq:weighted-parabolic-L2} gives \eqref{eq:weighted-parabolic-energy}.  Finally, if $y_0=0$, then \eqref{eq:weighted-parabolic-L2} becomes
\begin{align*}
\alpha_0 \norm{y}_{L^2_\beta(0,\Tend;V)}^2
\leq \alpha_0^{-1} \norm{g}_{L^2_\beta(0,\Tend;V^*)}^2.
\end{align*}
Taking square roots proves \eqref{eq:parabolic-Lipschitz}.
\end{proof}

\begin{remark}[Why the factor two does not affect the contraction]
The fixed-point argument uses only \eqref{eq:parabolic-Lipschitz}. The factor two in the combined energy estimate therefore affects only the constants in the subsequent full $\W(0,\Tend)$-bound.
\end{remark}

\subsection{Fixed-point construction}
We combine causality with coercivity.  Notice that the contraction parameter depends on $\kappa_\mu(\beta)$, whereas the inhomogeneous history term is allowed to depend on the full mass.  This distinction is essential: the past is prescribed data and does not enter the fixed-point difference.

\begin{theorem}[Weighted well-posedness]\label[theorem]{thm:wellposedness}
Let $\mu\in\M^0([0,\tau])$.  We choose $\beta\ge0$ such that
\begin{align}\label{eq:contraction-condition}
q_{\mu,\beta}:=\frac{\Lambda_1\kappa_\mu(\beta)}{\alpha_0}<1.
\end{align}
Then, \eqref{eq:problem} has a unique weak solution $u_\mu\in\W(0,\Tend)$.  Furthermore,
\begin{align}\label{eq:weighted-u-bound}
\norm{u_\mu}_{L^2_\beta(0,\Tend;V)}
\leq \frac{1}{1-q_{\mu,\beta}}
\Biggl [&\alpha_0^{-1/2}\norm{u_0}_H +\alpha_0^{-1}\norm{f}_{L^2_\beta(0,\Tend;V^*)}  \notag\\
&+\alpha_0^{-1}\Lambda_1\norm{\mu}_{\M([0,\tau])} \norm{\psi}_{L^2(-\tau,0;V)}\Biggr].
\end{align}
For each fixed $\mu\in\M^0([0,\tau])$, a weight $\beta$ satisfying \eqref{eq:contraction-condition} exists by \eqref{eq:kappa-decay}.
\end{theorem}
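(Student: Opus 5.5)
The plan is a Banach fixed-point argument in the weighted space $L^2_\beta(0,\Tend;V)$, which by \Cref{lem:weighted-equivalence} coincides with $L^2(0,\Tend;V)$ as a set. Using the splitting \eqref{eq:history-splitting}, the problem \eqref{eq:problem} is equivalent to
\begin{align*}
\partial_tu+A_0u=f-h_{\mu,\psi}-\K_\mu^+u,\qquad u(0)=u_0 .
\end{align*}
For $w\in L^2_\beta(0,\Tend;V)$ I define $\Phi(w):=y$, where $y\in\W(0,\Tend)$ is the unique solution, given by \Cref{lem:weighted-parabolic}, of $\partial_ty+A_0y=f-h_{\mu,\psi}-\K_\mu^+w$ with $y(0)=u_0$. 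The right-hand side lies in $L^2_\beta(0,\Tend;V^*)$ by \eqref{eq:history-forcing-bound}, \Cref{lem:weighted-causal}, and $f\in L^2(0,\Tend;V^*)$. Hence $\Phi$ maps $L^2_\beta(0,\Tend;V)$ into itself, and in fact into $\W(0,\Tend)$.

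For the contraction step, take $w_1,w_2$. The difference $\Phi(w_1)-\Phi(w_2)$ solves the parabolic problem with zero initial datum and forcing $-\K_\mu^+(w_1-w_2)$. Combining \eqref{eq:parabolic-Lipschitz} with \eqref{eq:weighted-causal} gives
\begin{align*}
\norm{\Phi(w_1)-\Phi(w_2)}_{L^2_\beta(0,\Tend;V)}\le q_{\mu,\beta}\norm{w_1-w_2}_{L^2_\beta(0,\Tend;V)} .
\end{align*}
Under \eqref{eq:contraction-condition} this is a strict contraction on a complete space, so Banach's theorem yields a unique fixed point $u_\mu$.

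A fixed point belongs to $\W(0,\Tend)$, satisfies $u_\mu(0)=u_0$, and solves \eqref{eq:problem}, since $\K_\mu\tilde u=\K_\mu^+u+h_{\mu,\psi}$ by \eqref{eq:history-splitting}. Conversely, every weak solution in $\W(0,\Tend)\subset L^2_\beta(0,\Tend;V)$ is a fixed point of $\Phi$ by the uniqueness part of \Cref{lem:weighted-parabolic}. This gives uniqueness among weak solutions. Existence of a suitable $\beta$ follows from \eqref{eq:kappa-decay}.

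For \eqref{eq:weighted-u-bound}, I compare $u_\mu=\Phi(u_\mu)$ with $y^\ast:=\Phi(0)$:
\begin{align*}
\norm{u_\mu}_{L^2_\beta}\le\norm{\Phi(u_\mu)-\Phi(0)}_{L^2_\beta}+\norm{\Phi(0)}_{L^2_\beta}\le q_{\mu,\beta}\norm{u_\mu}_{L^2_\beta}+\norm{y^\ast}_{L^2_\beta}.
\end{align*}
Rearranging gives $\norm{u_\mu}\le(1-q_{\mu,\beta})^{-1}\norm{y^\ast}$. To bound $y^\ast$, apply \eqref{eq:weighted-parabolic-L2} with $g=f-h_{\mu,\psi}$:
\begin{align*}
\alpha_0\norm{y^\ast}^2\le\norm{u_0}_H^2+\alpha_0^{-1}\norm{g}^2 .
\end{align*}
Using $\sqrt{a+b}\le\sqrt a+\sqrt b$ then gives
\begin{align*}
\norm{y^\ast}\le\alpha_0^{-1/2}\norm{u_0}_H+\alpha_0^{-1}\norm{g}_{L^2_\beta(0,\Tend;V^*)} .
\end{align*}
The triangle inequality together with \eqref{eq:history-forcing-bound} splits $\norm{g}$ into the $f$-term and the term $\Lambda_1\norm{\mu}_{\M([0,\tau])}\norm{\psi}_{L^2(-\tau,0;V)}$, which yields exactly the bracket in \eqref{eq:weighted-u-bound}.

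I expect no deep obstacle here, since the key estimates are already established. The only delicate point is bookkeeping. The contraction constant must involve only $\kappa_\mu(\beta)$ through $\K_\mu^+$, while the full mass $\norm{\mu}_{\M([0,\tau])}$ enters only through the fixed forcing $h_{\mu,\psi}$, which cancels in differences. One also has to check that the fixed-point formulation is genuinely equivalent to \Cref{def:weak-solution}, which relies on the representative-independence in \Cref{lem:joined-trajectory}.
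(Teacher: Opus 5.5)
Your proposal is correct and follows the paper's proof: the same contraction map $\Phi$ on $L^2_\beta(0,\Tend;V)$, the same contraction constant $q_{\mu,\beta}$ obtained from \eqref{eq:parabolic-Lipschitz} and \eqref{eq:weighted-causal}, and the same argument that every weak solution is a fixed point of $\Phi$. The only difference is cosmetic and lies in \eqref{eq:weighted-u-bound}: you use the standard fixed-point bound $\norm{u_\mu}_{L^2_\beta(0,\Tend;V)}\le(1-q_{\mu,\beta})^{-1}\norm{\Phi(0)}_{L^2_\beta(0,\Tend;V)}$, whereas the paper applies \eqref{eq:weighted-parabolic-L2} directly to $u_\mu$ with forcing $f-h_{\mu,\psi}-\K_\mu^+u_\mu$ and then absorbs the $q_{\mu,\beta}$-term; both routes give the identical constant.
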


\begin{proof}
We set
\begin{align*}
X_\beta:=L^2_\beta(0,\Tend;V).
\end{align*}
From \Cref{lem:weighted-equivalence}, $X_\beta$ coincides as a vector space with $L^2(0,\Tend;V)$ and is equipped with an equivalent Hilbert norm. In particular, $X_\beta$ is complete. We define the fixed-point map.  From \eqref{eq:history-forcing-bound} and \Cref{lem:weighted-equivalence},
\begin{align*}
\norm{h_{\mu,\psi}}_{L^2_\beta(0,\Tend;V^*)}
&\leq
\norm{h_{\mu,\psi}}_{L^2(0,\Tend;V^*)} 
\leq
\Lambda_1 \norm{\mu}_{\M([0,\tau])} \norm{\psi}_{L^2(-\tau,0;V)}.
\end{align*}
Furthermore, for any $w\in X_\beta$, \Cref{lem:weighted-causal} gives
\begin{align*}
  \K_\mu^+w\in L^2_\beta(0,\Tend;V^*).
\end{align*}
Therefore,
\begin{align*}
f-h_{\mu,\psi}-\K_\mu^+w \in L^2_\beta(0,\Tend;V^*).
\end{align*}
By \Cref{lem:weighted-parabolic}, there exists a unique $\Phi w\in\W(0,\Tend)$ satisfying
\begin{subequations} \label{eq:fixed-point-map}
\begin{align}
\displaystyle
\partial_t(\Phi w)+A_0(\Phi w) &= f-h_{\mu,\psi}-\K_\mu^+w, \label{eq:fixed-point-map=a} \\
 (\Phi w)(0)&=u_0. \label{eq:fixed-point-map=b}
\end{align}
\end{subequations}
Because
\begin{align*}
\W(0,\Tend)\subset L^2(0,\Tend;V)=X_\beta,
\end{align*}
this defines a map
\begin{align*}
  \Phi:X_\beta\longrightarrow X_\beta.
\end{align*}
We prove that $\Phi$ is a contraction.  Let $w_1,w_2\in X_\beta$ and set
\begin{align*}
z:=\Phi w_1-\Phi w_2.
\end{align*}
Subtracting the two equations in \eqref{eq:fixed-point-map}, we have
\begin{align*}
\partial_tz+A_0z
&= -\K_\mu^+(w_1-w_2), \\
z(0)&=0.
\end{align*}
Therefore, \eqref{eq:parabolic-Lipschitz} and \eqref{eq:weighted-causal} yield
\begin{align*}
\norm{\Phi w_1-\Phi w_2}_{L^2_\beta(0,\Tend;V)}
&\leq
\alpha_0^{-1} \norm{\K_\mu^+(w_1-w_2)}_{L^2_\beta(0,\Tend;V^*)}\\
&\leq
\frac{\Lambda_1\kappa_\mu(\beta)}{\alpha_0} \norm{w_1-w_2}_{L^2_\beta(0,\Tend;V)}\\
&= q_{\mu,\beta} \norm{w_1-w_2}_{L^2_\beta(0,\Tend;V)}.
\end{align*}
Because $q_{\mu,\beta}<1$, the map $\Phi$ is a strict contraction on the complete space $X_\beta$.  The Banach fixed-point theorem therefore gives a unique $u_\mu\in X_\beta$ such that
\begin{align*}
\Phi u_\mu=u_\mu.
\end{align*}
Because $\Phi u_\mu\in\W(0,\Tend)$, the fixed point satisfies
\begin{align*}
u_\mu\in\W(0,\Tend).
\end{align*}
Its fixed-point equation is
\begin{align*}
\partial_tu_\mu+A_0u_\mu
=
f-h_{\mu,\psi}-\K_\mu^+u_\mu,
\quad
u_\mu(0)=u_0.
\end{align*}
By the history decomposition \eqref{eq:history-splitting}, this is precisely the weak formulation of \eqref{eq:problem}.  Thus, $u_\mu$ is a weak solution.

To prove uniqueness among all weak solutions, let $v\in\W(0,\Tend)$ be any weak solution of \eqref{eq:problem}. Then, $v\in X_\beta$, and \eqref{eq:history-splitting} shows that $v$ satisfies \eqref{eq:fixed-point-map} with $w=v$.  Therefore, $v=\Phi v$.  Since $\Phi$ has only one fixed point in $X_\beta$, we conclude that $v=u_\mu$.

It remains to establish the estimate.  We define
\begin{align*}
F_\mu
:= f-h_{\mu,\psi}-\K_\mu^+u_\mu.
\end{align*}
Applying \eqref{eq:weighted-parabolic-L2} to
\begin{align*}
\partial_tu_\mu+A_0u_\mu=F_\mu,
\quad
u_\mu(0)=u_0,
\end{align*}
gives
\begin{align*}
\alpha_0
\norm{u_\mu}_{L^2_\beta(0,\Tend;V)}^2
\leq
\norm{u_0}_H^2
+
\alpha_0^{-1}
\norm{F_\mu}_{L^2_\beta(0,\Tend;V^*)}^2.
\end{align*}
Taking square roots and using $\sqrt{a+b}\leq\sqrt a+\sqrt b$ $(a,b \geq 0, a,b \in \R)$, we have
\begin{align*}
\norm{u_\mu}_{L^2_\beta(0,\Tend;V)}
\leq
\alpha_0^{-1/2}\norm{u_0}_H
+
\alpha_0^{-1}
\norm{F_\mu}_{L^2_\beta(0,\Tend;V^*)}.
\end{align*}
The triangle inequality, the estimate for the prescribed-history forcing, and \eqref{eq:weighted-causal} now give
\begin{align*}
\norm{u_\mu}_{L^2_\beta(0,\Tend;V)}
&\leq \alpha_0^{-1/2}\norm{u_0}_H + \alpha_0^{-1} \norm{f}_{L^2_\beta(0,\Tend;V^*)}\\
&\quad \quad +
  \alpha_0^{-1} \norm{h_{\mu,\psi}}_{L^2_\beta(0,\Tend;V^*)} +
  \alpha_0^{-1}
  \norm{\K_\mu^+u_\mu}_{L^2_\beta(0,\Tend;V^*)}\\
&\quad \leq
  \alpha_0^{-1/2}\norm{u_0}_H
  +
  \alpha_0^{-1}
  \norm{f}_{L^2_\beta(0,\Tend;V^*)}\\ 
&\quad \quad + \alpha_0^{-1}\Lambda_1 \norm{\mu}_{\M([0,\tau])} \norm{\psi}_{L^2(-\tau,0;V)} + q_{\mu,\beta} \norm{u_\mu}_{L^2_\beta(0,\Tend;V)}.
\end{align*}
Because $q_{\mu,\beta}<1$, the last term can be absorbed into the left-hand side. Therefore,
\begin{align*}
  \norm{u_\mu}_{L^2_\beta(0,\Tend;V)}
  \leq
  \frac{1}{1-q_{\mu,\beta}}
  \Bigl(
  &\alpha_0^{-1/2}\norm{u_0}_H
  +
  \alpha_0^{-1}
  \norm{f}_{L^2_\beta(0,\Tend;V^*)}\\
  &+
  \alpha_0^{-1}\Lambda_1
  \norm{\mu}_{\M([0,\tau])}
  \norm{\psi}_{L^2(-\tau,0;V)}
  \Bigr),
\end{align*}
which is \eqref{eq:weighted-u-bound}.

Finally, because $\mu\in\M^0([0,\tau])$, \eqref{eq:kappa-decay} implies
\begin{align*}
  \kappa_\mu(\beta)\longrightarrow0
  \quad\text{as }\beta\to\infty.
\end{align*}
Therefore, a value of $\beta\geq0$ satisfying $q_{\mu,\beta}<1$ always exists.
\end{proof}


\begin{corollary}[Full energy estimate]
\label[corollary]{cor:full-energy}
Under the assumptions of \Cref{thm:wellposedness}, there exists a constant $ C = C\left( \alpha_0,\Lambda_0,\Lambda_1,\beta,\Tend, \norm{\mu}_{\M([0,\tau])},q_{\mu,\beta} \right)>0$ such that
\begin{align}
&\norm{u_\mu}_{\mathcal{C}([0,\Tend];H)}
  +\norm{u_\mu}_{L^2(0,\Tend;V)}
  +\norm{\partial_tu_\mu}_{L^2(0,\Tend;V^*)} \notag\\
&\quad\leq
  C\left(
    \norm{u_0}_H
    +\norm{f}_{L^2(0,\Tend;V^*)}
    +\norm{\psi}_{L^2(-\tau,0;V)}
  \right). \label{eq:full-energy-estimate}
\end{align}
\end{corollary}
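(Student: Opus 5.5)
The proof will start from the weighted bound \eqref{eq:weighted-u-bound} and then convert weighted norms into unweighted ones using \Cref{lem:weighted-equivalence}. On $(0,\Tend)$ we have $\norm{f}_{L^2_\beta(0,\Tend;V^*)}\le\norm{f}_{L^2(0,\Tend;V^*)}$ and $\norm{u_\mu}_{L^2(0,\Tend;V)}\le e^{\beta\Tend}\norm{u_\mu}_{L^2_\beta(0,\Tend;V)}$. This gives
\begin{align*}
\norm{u_\mu}_{L^2(0,\Tend;V)}\le \frac{e^{\beta\Tend}}{1-q_{\mu,\beta}}\max\{\alpha_0^{-1/2},\alpha_0^{-1},\alpha_0^{-1}\Lambda_1\norm{\mu}_{\M([0,\tau])}\}\,D,
\end{align*}
where $D:=\norm{u_0}_H+\norm{f}_{L^2(0,\Tend;V^*)}+\norm{\psi}_{L^2(-\tau,0;V)}$.

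Next I will bound the forcing $F_\mu=f-h_{\mu,\psi}-\K_\mu^+u_\mu$ in the unweighted norm. I use \eqref{eq:measure-convolution-bound} for both history pieces. Together with \Cref{lem:joined-trajectory} this gives
\begin{align*}
\norm{\K_\mu\tilde u_\mu}_{L^2(0,\Tend;V^*)}\le\Lambda_1\norm{\mu}_{\M([0,\tau])}\bigl(\norm{\psi}_{L^2(-\tau,0;V)}+\norm{u_\mu}_{L^2(0,\Tend;V)}\bigr).
\end{align*}
Then
\begin{align*}
\norm{F_\mu}_{L^2(0,\Tend;V^*)}\le \norm{f}_{L^2(0,\Tend;V^*)}+\Lambda_1\norm{\mu}_{\M([0,\tau])}\bigl(\norm{\psi}_{L^2(-\tau,0;V)}+\norm{u_\mu}_{L^2(0,\Tend;V)}\bigr),
\end{align*}
and by the previous step this is bounded by a constant times $D$.

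For the time derivative, I use the equation $\partial_tu_\mu=F_\mu-A_0u_\mu$ in $L^2(0,\Tend;V^*)$. Since $\norm{A_0}_{\mathcal L(V,V^*)}\le\Lambda_0$, this gives $\norm{\partial_tu_\mu}_{L^2(0,\Tend;V^*)}\le\norm{F_\mu}_{L^2(0,\Tend;V^*)}+\Lambda_0\norm{u_\mu}_{L^2(0,\Tend;V)}$. This is the only place $\Lambda_0$ enters the constant.

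For the $\mathcal C([0,\Tend];H)$ norm, I apply \eqref{eq:weighted-parabolic-sup} to $u_\mu$ with forcing $F_\mu$. It gives $\sup_t e^{-2\beta t}\norm{u_\mu(t)}_H^2\le\norm{u_0}_H^2+\alpha_0^{-1}\norm{F_\mu}_{L^2_\beta(0,\Tend;V^*)}^2$. Multiplying by $e^{2\beta\Tend}$ and using $\norm{F_\mu}_{L^2_\beta}\le\norm{F_\mu}_{L^2}$ then yields $\norm{u_\mu}_{\mathcal C([0,\Tend];H)}\le e^{\beta\Tend}(\norm{u_0}_H+\alpha_0^{-1/2}\norm{F_\mu}_{L^2(0,\Tend;V^*)})$, which is bounded by a constant times $D$. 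Alternatively one could use the Lions--Magenes embedding together with the $\W$ bound, but its embedding constant depends on $\Tend$ in a less explicit way, so I prefer the direct estimate.

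Summing the three bounds gives \eqref{eq:full-energy-estimate}, with $C$ depending only on $\alpha_0,\Lambda_0,\Lambda_1,\beta,\Tend,\norm{\mu}_{\M([0,\tau])}$ and $q_{\mu,\beta}$. I expect no real obstacle. The only care needed is bookkeeping: control $\K_\mu^+u_\mu$ in the unweighted norm through the full mass bound \eqref{eq:measure-convolution-bound} rather than through $\kappa_\mu$, which is harmless because only finiteness of the constant is required.
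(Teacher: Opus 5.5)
Your proposal is correct and follows essentially the paper's route: start from the weighted bound \eqref{eq:weighted-u-bound}, control the forcing $f-h_{\mu,\psi}-\K_\mu^+u_\mu$, apply \eqref{eq:weighted-parabolic-sup} for the $H$-norm and the equation with $\Lambda_0$ for $\partial_tu_\mu$, and convert weighted norms to unweighted ones at the cost of a factor $e^{\beta\Tend}$. The only difference is bookkeeping. The paper bounds the causal part in the weighted norm through $\Lambda_1\kappa_\mu(\beta)=\alpha_0q_{\mu,\beta}$, whereas you bound it in the unweighted norm through the full mass $\norm{\mu}_{\M([0,\tau])}$; both give a constant with the stated dependence.
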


\begin{proof}
We set
\begin{align*}
  D_{\mu,\beta}
  &:=
  \alpha_0^{-1/2}\norm{u_0}_H
  +\alpha_0^{-1}
  \norm{f}_{L^2_\beta(0,\Tend;V^*)}\\
  &\quad+
  \alpha_0^{-1}\Lambda_1
  \norm{\mu}_{\M([0,\tau])}
  \norm{\psi}_{L^2(-\tau,0;V)}.
\end{align*}
From \eqref{eq:weighted-u-bound},
\begin{align}\label{eq:full-energy-u-intermediate}
  \norm{u_\mu}_{L^2_\beta(0,\Tend;V)}
  \leq
  \frac{D_{\mu,\beta}}{1-q_{\mu,\beta}}.
\end{align}
We define
\begin{align*}
  G_\mu
  :=
  f-h_{\mu,\psi}-\K_\mu^+u_\mu.
\end{align*}
Then,
\begin{align*}
  \partial_tu_\mu+A_0u_\mu=G_\mu,
  \quad
  u_\mu(0)=u_0.
\end{align*}
By \eqref{eq:history-forcing-bound}, \eqref{eq:weighted-causal}, and \eqref{eq:full-energy-u-intermediate},
\begin{align*}
&\norm{G_\mu}_{L^2_\beta(0,\Tend;V^*)} \\
&\quad \leq
\norm{f}_{L^2_\beta(0,\Tend;V^*)}
+
\Lambda_1\norm{\mu}_{\M([0,\tau])} \norm{\psi}_{L^2(-\tau,0;V)}
+
\Lambda_1\kappa_\mu(\beta) \frac{D_{\mu,\beta}}{1-q_{\mu,\beta}}.
\end{align*}
Applying \eqref{eq:weighted-parabolic-sup} to $\partial_tu_\mu+A_0u_\mu=G_\mu$ yields
\begin{align*}
\sup_{0\leq t\leq\Tend}
e^{-\beta t}\norm{u_\mu(t)}_H
\leq
\left(
 \norm{u_0}_H^2 + \alpha_0^{-1} \norm{G_\mu}_{L^2_\beta(0,\Tend;V^*)}^2
\right)^{1/2}.
\end{align*}
Furthermore, the equation and the boundedness of $A_0$ yield
\begin{align*}
  \norm{\partial_tu_\mu}_{L^2_\beta(0,\Tend;V^*)}
  &\leq
  \norm{A_0u_\mu}_{L^2_\beta(0,\Tend;V^*)}
  +
  \norm{G_\mu}_{L^2_\beta(0,\Tend;V^*)}\\
  &\leq
  \Lambda_0
  \norm{u_\mu}_{L^2_\beta(0,\Tend;V)}
  +
  \norm{G_\mu}_{L^2_\beta(0,\Tend;V^*)}.
\end{align*}
\Cref{lem:weighted-equivalence} and
\begin{align*}
  \sup_{0\leq t\leq\Tend}\norm{u_\mu(t)}_H
  \leq
  e^{\beta\Tend}
  \sup_{0\leq t\leq\Tend}
  e^{-\beta t}\norm{u_\mu(t)}_H
\end{align*}
convert the weighted estimates into unweighted estimates. Because
\begin{align*}
  \norm{f}_{L^2_\beta(0,\Tend;V^*)}
  \leq
  \norm{f}_{L^2(0,\Tend;V^*)},
\end{align*}
the resulting constant depends only on the parameters displayed in the statement.  This proves \eqref{eq:full-energy-estimate}.
\end{proof}

\begin{remark}[Interpretation of the contraction condition]
The condition $q_{\mu,\beta}<1$ is not a small-data condition and does not restrict the size of a fixed genuinely retarded measure $\mu\in\M^0([0,\tau])$.  For any such measure, it can be enforced by increasing the exponential weight $\beta$. Failure of the inequality for a particular value of $\beta$ means only that the fixed-point map is not contractive in that particular weighted norm; it does not imply non-uniqueness or ill-posedness of the linear equation.  Measures with an atom at the origin are structurally different, because that atom acts as an instantaneous term and should instead be absorbed into the principal form as described in \Cref{rem:atom-zero}.
\end{remark}

\begin{theorem}[Dependence on the data]\label[theorem]{thm:data-stability}
Fix $\mu\in\M^0([0,\tau])$ and choose $\beta$ with $q_{\mu,\beta}<1$.  Let $u$ and $\bar u$ be the solutions associated with data $(u_0,f,\psi)$ and $(\bar u_0,\bar f,\bar\psi)$, respectively.  Then,
\begin{align}\label{eq:data-stability}
\norm{u-\bar u}_{L^2_\beta(0,\Tend;V)}
\leq \frac{1}{1-q_{\mu,\beta}}
\Biggl [&\alpha_0^{-1/2}\norm{u_0-\bar u_0}_H
+\alpha_0^{-1}\norm{f-\bar f}_{L^2_\beta(0,\Tend;V^*)}\notag\\
&+\alpha_0^{-1}\Lambda_1\norm{\mu}_{\M([0,\tau])}
\norm{\psi-\bar\psi}_{L^2(-\tau,0;V)}\Biggr].
\end{align}
Furthermore, there exists a constant $C_{\mu,\beta,\Tend}>0$, depending only on $\alpha_0$, the continuity bounds of $A_0$ and $A_1$, $\norm{\mu}_{\M([0,\tau])}$, $\beta$, $\Tend$, and $q_{\mu,\beta}$, such that
\begin{align*}
\norm{u-\bar u}_{\W(0,\Tend)}
+
\norm{u-\bar u}_{C([0,\Tend];H)}
\leq
C_{\mu,\beta,\Tend}\,\mathcal D_\beta,
\end{align*}
where
\begin{align*}
\mathcal D_\beta
&:=
\norm{u_0-\bar u_0}_H
+
\norm{f-\bar f}_{L^2_\beta(0,\Tend;V^*)}
+
\norm{\mu}_{\M([0,\tau])} \norm{\psi-\bar\psi}_{L^2(-\tau,0;V)}.
\end{align*}
\end{theorem}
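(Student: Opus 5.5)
The proof rests on linearity. Set $e:=u-\bar u$ together with the data differences $e_0:=u_0-\bar u_0$, $g:=f-\bar f$ and $\eta:=\psi-\bar\psi$. The joined trajectories satisfy $\tilde u-\tilde{\bar u}=\E_+e+\E_-\eta$, so the splitting \eqref{eq:history-splitting} and the linearity of $\K_\mu$ from \Cref{lem:measure-convolution} show that $e\in\W(0,\Tend)$ is the weak solution of \eqref{eq:problem} with data $(e_0,g,\eta)$. Since $h_{\mu,\psi}-h_{\mu,\bar\psi}=h_{\mu,\eta}$, this means
\begin{align*}
\partial_te+A_0e+\K_\mu^+e=g-h_{\mu,\eta},\qquad e(0)=e_0 .
\end{align*}
By the uniqueness part of \Cref{thm:wellposedness}, $e$ is the solution $u_\mu$ for these data, with the same $\beta$ because $q_{\mu,\beta}$ does not depend on the data. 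Applying \eqref{eq:weighted-u-bound} to $(e_0,g,\eta)$ gives \eqref{eq:data-stability} directly.

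For the $\W(0,\Tend)$ and $\mathcal C([0,\Tend];H)$ bound, apply \Cref{cor:full-energy} to the difference problem. A better route is to redo its argument, since the right-hand side should be $\mathcal D_\beta$, with the weighted norm of $g$ and the factor $\norm{\mu}_{\M([0,\tau])}$ on $\eta$. Let $G:=g-h_{\mu,\eta}-\K_\mu^+e$. Then \eqref{eq:history-forcing-bound}, \eqref{eq:weighted-causal} and \eqref{eq:data-stability} give
\begin{align*}
\norm{G}_{L^2_\beta(0,\Tend;V^*)}\le \norm{g}_{L^2_\beta(0,\Tend;V^*)}+\Lambda_1\norm{\mu}_{\M([0,\tau])}\norm{\eta}_{L^2(-\tau,0;V)}+\frac{\Lambda_1\kappa_\mu(\beta)D}{1-q_{\mu,\beta}},
\end{align*}
where $D$ is the bracket in \eqref{eq:data-stability}. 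Both $D$ and this bound are at most a constant times $\mathcal D_\beta$.

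The remaining norms are controlled as follows. Estimate \eqref{eq:weighted-parabolic-sup} controls $\sup_t e^{-\beta t}\norm{e(t)}_H$. The equation $\partial_te=G-A_0e$ gives $\norm{\partial_te}_{L^2_\beta(0,\Tend;V^*)}\le\Lambda_0\norm{e}_{L^2_\beta(0,\Tend;V)}+\norm{G}_{L^2_\beta(0,\Tend;V^*)}$. \Cref{lem:weighted-equivalence} then removes every weight at the cost of a factor $e^{\beta\Tend}$. Collecting the factors $\alpha_0^{-1/2}$, $\alpha_0^{-1}$, $\Lambda_0$, $\Lambda_1$, $\kappa_\mu(\beta)\le\norm{\mu}_{\M([0,\tau])}$, $(1-q_{\mu,\beta})^{-1}$ and $e^{\beta\Tend}$ yields $C_{\mu,\beta,\Tend}$.

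The only point needing care is the first step: checking that the difference of the two history terms is exactly the history term built from $\tilde e$ with prehistory $\eta$. This holds as an identity in $L^2(0,\Tend;V^*)$ by the representative-independence in \Cref{lem:measure-convolution} and \Cref{lem:joined-trajectory}. Everything after that is bookkeeping of constants.
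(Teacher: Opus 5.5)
Your proof is correct and follows the paper's argument. The difference $e$ solves the same delay problem with data $(u_0-\bar u_0,f-\bar f,\psi-\bar\psi)$, and the full bound comes from rerunning the proof of \Cref{cor:full-energy} while keeping the weighted norm of $f-\bar f$ and the factor $\norm{\mu}_{\M([0,\tau])}$. The only difference is cosmetic: you get \eqref{eq:data-stability} from uniqueness plus \eqref{eq:weighted-u-bound}, whereas the paper re-derives the same absorption estimate directly for the difference equation.
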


\begin{proof}
We set $e:=u-\bar u$. By the linearity of the prescribed-history forcing and of the causal history operator,
\begin{align*}
h_{\mu,\psi}-h_{\mu,\bar\psi}
&= h_{\mu,\psi-\bar\psi}, \quad 
\K_\mu^+u-\K_\mu^+\bar u
= \K_\mu^+e.
\end{align*}
Subtracting the two weak equations and using \eqref{eq:history-splitting}, we obtain
\begin{align*}
\partial_te+A_0e
&=
(f-\bar f)-h_{\mu,\psi-\bar\psi}-\K_\mu^+e,\\
e(0)&=u_0-\bar u_0.
\end{align*}
Applying the $L^2_\beta(0,\Tend;V)$-estimate of \Cref{lem:weighted-parabolic} gives
\begin{align*}
\norm{e}_{L^2_\beta(0,\Tend;V)}
\leq{}&
\alpha_0^{-1/2}\norm{u_0-\bar u_0}_H\\
&+
\alpha_0^{-1}
\norm{ (f-\bar f)-h_{\mu,\psi-\bar\psi}-\K_\mu^+e }_{L^2_\beta(0,\Tend;V^*)}.
\end{align*}
Using the triangle inequality, \eqref{eq:history-forcing-bound}, and \Cref{lem:weighted-causal}, we have
\begin{align*}
\norm{e}_{L^2_\beta(0,\Tend;V)}
&\leq
\alpha_0^{-1/2}\norm{u_0-\bar u_0}_H
+
\alpha_0^{-1}
\norm{f-\bar f}_{L^2_\beta(0,\Tend;V^*)}\\
&\ +
\alpha_0^{-1}\Lambda_1
\norm{\mu}_{\M([0,\tau])}
\norm{\psi-\bar\psi}_{L^2(-\tau,0;V)}
+
\frac{\Lambda_1\kappa_\mu(\beta)}{\alpha_0} \norm{e}_{L^2_\beta(0,\Tend;V)}.
\end{align*}
Because the last coefficient is $q_{\mu,\beta}<1$, absorbing the last term into the left-hand side proves \eqref{eq:data-stability}.

The difference $e=u-\bar u$ satisfies the same type of equation as in \Cref{cor:full-energy}, with initial datum $u_0-\bar u_0$, forcing $f-\bar f$, and prescribed history $\psi-\bar\psi$.  Repeating the proof of \Cref{cor:full-energy} for this difference equation, while retaining the weighted norm $\norm{f-\bar f}_{L^2_\beta(0,\Tend;V^*)}$, gives the asserted estimate in $\W(0,\Tend)\cap \mathcal{C}([0,\Tend];H)$.
\end{proof}

\begin{corollary}[Uniform bounds for a kernel family]
\label[corollary]{cor:uniform-family}
Let $\mathfrak K\subset\M^0([0,\tau])$.  Suppose that there exist $\beta\geq0$ and $q_*<1$ such that
\begin{align*}
  \sup_{\mu\in\mathfrak K}
  \frac{\Lambda_1\kappa_\mu(\beta)}{\alpha_0}
  \leq q_*,
  \quad
  M_*:=
  \sup_{\mu\in\mathfrak K}
  \norm{\mu}_{\M([0,\tau])}
  <\infty.
\end{align*}
Then, the estimates in \Cref{thm:wellposedness,cor:full-energy,thm:data-stability} hold uniformly for $\mu\in\mathfrak K$.  In particular,
\begin{align*}
&\norm{u_\mu}_{L^2_\beta(0,\Tend;V)} \\
&\quad \leq \frac{1}{1-q_*}
\left[
\alpha_0^{-1/2}\norm{u_0}_H
  +\alpha_0^{-1}
  \norm{f}_{L^2_\beta(0,\Tend;V^*)}+\alpha_0^{-1}\Lambda_1M_*
  \norm{\psi}_{L^2(-\tau,0;V)}
  \right].
\end{align*}
The corresponding full-energy and data-stability estimates in $\W(0,\Tend)\cap \mathcal{C}([0,\Tend];H)$ are also uniform over $\mu\in\mathfrak K$.
\end{corollary}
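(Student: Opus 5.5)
The plan is to revisit the proofs of \Cref{thm:wellposedness}, \Cref{cor:full-energy} and \Cref{thm:data-stability} and check that every constant appearing there depends on $\mu$ only through $q_{\mu,\beta}$ and $\norm{\mu}_{\M([0,\tau])}$, and that it does so monotonically. Once that is verified, both quantities can be replaced by the uniform bounds $q_*$ and $M_*$. Fix the common weight $\beta$ from the hypothesis. For each $\mu\in\mathfrak K$ we have $q_{\mu,\beta}\le q_*<1$, so \Cref{thm:wellposedness} applies with this same $\beta$ for every kernel in the family. This gives existence and uniqueness of $u_\mu$ together with \eqref{eq:weighted-u-bound}.

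In \eqref{eq:weighted-u-bound}, the prefactor $(1-q_{\mu,\beta})^{-1}$ is increasing in $q_{\mu,\beta}$, so it is bounded by $(1-q_*)^{-1}$. The term $\alpha_0^{-1}\Lambda_1\norm{\mu}_{\M([0,\tau])}\norm{\psi}_{L^2(-\tau,0;V)}$ is bounded by the same expression with $M_*$ in place of $\norm{\mu}_{\M([0,\tau])}$. Together these give the displayed uniform bound. The same substitution in \eqref{eq:data-stability} gives the uniform weighted data-stability estimate.

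For the full-energy and $\W(0,\Tend)\cap\mathcal C([0,\Tend];H)$ estimates, I would go through the proof of \Cref{cor:full-energy} and record the constant explicitly. There $D_{\mu,\beta}$ is controlled using $M_*$, and $\norm{G_\mu}_{L^2_\beta(0,\Tend;V^*)}$ is bounded by
\[
\norm{f}_{L^2_\beta(0,\Tend;V^*)}+\Lambda_1M_*\norm{\psi}_{L^2(-\tau,0;V)}+\alpha_0q_*\,\frac{D_{\mu,\beta}}{1-q_*},
\]
where I use $\Lambda_1\kappa_\mu(\beta)=\alpha_0 q_{\mu,\beta}\le\alpha_0q_*$. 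The remaining steps are the sup estimate \eqref{eq:weighted-parabolic-sup}, the bound $\norm{\partial_tu_\mu}\le\Lambda_0\norm{u_\mu}+\norm{G_\mu}$, and the conversion factors $e^{\beta\Tend}$ from \Cref{lem:weighted-equivalence}. These involve only $\alpha_0,\Lambda_0,\beta,\Tend$ and do not depend on $\mu$. Each step is monotone in $q_{\mu,\beta}$ and $\norm{\mu}_{\M([0,\tau])}$, so the constant can be taken as $C(\alpha_0,\Lambda_0,\Lambda_1,\beta,\Tend,M_*,q_*)$. The difference equation in \Cref{thm:data-stability} is handled in exactly the same way.

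I expect the only real obstacle to be bookkeeping rather than analysis. \Cref{cor:full-energy} states the dependence of $C$ only abstractly, so monotonicity has to be checked by making the constant explicit rather than assumed. I would handle this by writing $C$ as a finite composition of sums, products and the factor $(1-q)^{-1}$, each nondecreasing in $q$ and in the mass $m$. The supremum over $\mathfrak K$ is then attained at $(q_*,M_*)$.
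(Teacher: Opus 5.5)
Your proposal is correct and follows essentially the same route as the paper. Both fix the common weight $\beta$ and bound $(1-q_{\mu,\beta})^{-1}$ by $(1-q_*)^{-1}$, $\Lambda_1\kappa_\mu(\beta)=\alpha_0q_{\mu,\beta}$ by $\alpha_0q_*$, and $\norm{\mu}_{\M([0,\tau])}$ by $M_*$; both then note that the remaining constants in the proof of \Cref{cor:full-energy}, including the common factor $e^{\beta\Tend}$, do not depend on the kernel. Your explicit monotonicity bookkeeping is just a more careful version of the paper's remark that kernel dependence enters only through these three quantities.
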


\begin{proof}
For any $\mu\in\mathfrak K$, the assumptions give
\begin{align*}
q_{\mu,\beta}\leq q_*<1,
\quad
\norm{\mu}_{\M([0,\tau])}\leq M_*.
\end{align*}
Consequently,
\begin{align*}
  \frac{1}{1-q_{\mu,\beta}}
  \leq
  \frac{1}{1-q_*},
  \quad
  \Lambda_1\kappa_\mu(\beta)
  =
  \alpha_0q_{\mu,\beta}
  \leq
  \alpha_0q_*.
\end{align*}
Substituting the first of these bounds and $\norm{\mu}_{\M([0,\tau])}\leq M_*$ into \eqref{eq:weighted-u-bound} gives
\begin{align*}
  \norm{u_\mu}_{L^2_\beta(0,\Tend;V)}
  \leq
  \frac{1}{1-q_*}
  \Bigl[
  &\alpha_0^{-1/2}\norm{u_0}_H
  +\alpha_0^{-1}
  \norm{f}_{L^2_\beta(0,\Tend;V^*)}\\
  &+
  \alpha_0^{-1}\Lambda_1M_*
  \norm{\psi}_{L^2(-\tau,0;V)}
  \Bigr].
\end{align*}
We next consider the full-energy estimate.  In the proof of \Cref{cor:full-energy}, the kernel-dependent quantities occur only through
\begin{align*}
  \frac{1}{1-q_{\mu,\beta}},
  \quad
  \Lambda_1\kappa_\mu(\beta),
  \quad
  \norm{\mu}_{\M([0,\tau])}.
\end{align*}
The preceding uniform bounds therefore show that the weighted $L^2(0,\Tend;V)$-, $L^2(0,\Tend;V^*)$-, and continuous $H$-estimates obtained there have constants independent of $\mu\in\mathfrak K$. Because the weight $\beta$ is common to the whole family, the weighted-to-unweighted norm equivalences introduce only the common factor $e^{\beta\Tend}$. Therefore, the full-energy estimate is uniform over $\mathfrak K$, with a constant depending only on
\begin{align*}
  \alpha_0,\Lambda_0,\Lambda_1,\beta,\Tend,q_*,
  \ \text{and}\ M_*.
\end{align*}
Finally, substituting $q_{\mu,\beta}\leq q_*$ and $\norm{\mu}_{\M([0,\tau])}\leq M_*$ into \eqref{eq:data-stability} gives the uniform $L^2_\beta(0,\Tend;V)$ data-stability estimate.  Applying the same full-energy argument to the difference equation for $u-\bar u$ gives the corresponding uniform estimate in $\W(0,\Tend)\cap\mathcal{C}([0,\Tend];H)$.
\end{proof}

\section{Delayed diffusion fluxes: a concrete PDE realisation}\label{sec:pde-realisation}
The abstract formulation is designed for situations in which the delayed quantity contains spatial derivatives.  This section spells out that case in detail.  Besides providing a concrete model, it explains why an $H$-valued delay theory cannot simply be invoked: even the variational Dirichlet Laplacian does not admit a bounded extension from the pivot space into the energy dual.

\subsection{The model and its weak formulation}
Let $\Omega\subset\R^d$, $d\in\{1,2,3\}$, be a bounded Lipschitz domain. We set
\begin{align*}
  H=L^2(\Omega),\quad V=H_0^1(\Omega),\quad V^*=H^{-1}(\Omega),
\end{align*}
and equip $V$ with
\begin{align*}
  \norm{v}_V:=\norm{\nabla v}_{L^2(\Omega;\R^d)}.
\end{align*}
By the Poincar\'e inequality, this is a norm equivalent to the usual $H^1(\Omega)$-norm on $H_0^1(\Omega)$.

Let $B_0,B_1\in L^\infty(\Omega;\R^{d\times d})$.  Throughout this section, the coefficient norm is the essential supremum of the Euclidean operator norm,
\begin{align}\label{eq:coefficient-operator-norm}
  \norm{B_i}_{L^\infty(\Omega;\mathcal L(\R^d))}
  :=\mathop{\operatorname{ess\,sup}}_{x\in\Omega}
    \sup_{0\ne\xi\in\R^d}
    \frac{|B_i(x)\xi|}{|\xi|},
  \quad i\in\{0,1\}.
\end{align}
We assume that there is a number $\lambda_0>0$ such that, for almost every $x\in\Omega$ and any $\xi\in\R^d$,
\begin{align}\label{eq:B0-ellipticity}
  \xi^\top B_0(x)\xi\ge \lambda_0|\xi|^2.
\end{align}
No symmetry is required of $B_0$, and no symmetry, ellipticity, or sign condition is imposed on $B_1$.  The corresponding bilinear forms are
\begin{align}\label{eq:diffusion-forms}
  a_i(w,v)=\int_\Omega B_i(x)\nabla w\cdot\nabla v\,\dd x,
  \quad i\in\{0,1\}.
\end{align}
For a finite retarded measure $\mu\in\M^0([0,\tau])$, the strong notation for the problem is
\begin{align}
  \partial_tu
  -\nabla\!\cdot(B_0\nabla u)
  -\int_{[0,\tau]}
       \nabla\!\cdot\left(B_1\nabla\tilde u(t-s)\right)
       \,\dd\mu(s)
  &=f
  &&\text{in }\Omega\times(0,\Tend), \label{eq:delayed-diffusion-strong}\\
  u&=0
  &&\text{on }\partial\Omega\times(0,\Tend),\label{eq:delayed-diffusion-bc}\\
  u(0)&=u_0
  &&\text{in }L^2(\Omega),\label{eq:delayed-diffusion-ic}
\end{align}
with prescribed past $\psi$ on $(-\tau,0)$.  Formula \eqref{eq:delayed-diffusion-strong} is only mnemonic: neither the divergence of the delayed flux nor the measure integral is assumed to be an $L^2(\Omega)$-valued function. The rigorous formulation is the following.  We seek $u\in\W(0,\Tend)$, with the joined trajectory $\tilde u$ from \Cref{lem:joined-trajectory}, such that
\begin{align}\label{eq:delayed-diffusion-Vstar}
  \partial_tu(t)+A_0u(t)+\K_\mu\tilde u(t)=f(t)
  \quad\text{in }H^{-1}(\Omega)
\end{align}
for almost every $t\in(0,\Tend)$ and $u(0)=u_0$ in $L^2(\Omega)$. Equivalently,
\begin{align}\label{eq:delayed-diffusion-weak}
  \pair{\partial_tu(t)}{v}
  &+\int_\Omega B_0\nabla u(t)\cdot\nabla v\,\dd x\notag\\
  &+\int_{[0,\tau]}
       \int_\Omega B_1\nabla\tilde u(t-s)\cdot\nabla v\,\dd x
       \,\dd\mu(s)
   =\pair{f(t)}{v}
\end{align}
for any $v\in H_0^1(\Omega)$ and for almost every $t\in(0,\Tend)$.  From \Cref{lem:measure-convolution}, this formulation requires only
\begin{align*}
  \tilde u\in L^2(-\tau,\Tend;H_0^1(\Omega)).
\end{align*}
In particular, an atomic kernel $m\delta_{\tau_d}$ produces the translated term
\begin{align*}
  m\int_\Omega B_1\nabla\tilde u(t-\tau_d)\cdot\nabla v\,\dd x.
\end{align*}
The map $t\mapsto\tilde u(t-\tau_d)$ is a well-defined $L^2$-equivalence class, and changing the representative of the past on a Lebesgue-null set changes this translated term only for a Lebesgue-null set of present times.

\begin{proposition}[Verification of the abstract assumptions]\label[proposition]{prop:pde-assumptions}
Under \eqref{eq:B0-ellipticity}, the forms in \eqref{eq:diffusion-forms} satisfy, for all $w,v\in V$,
\begin{align*}
  |a_i(w,v)|
  &\leq
  \norm{B_i}_{L^\infty(\Omega;\mathcal L(\R^d))}
  \norm{w}_V\norm{v}_V,
  \quad i\in\{0,1\},\\
  a_0(v,v) &\geq \lambda_0\norm{v}_V^2.
\end{align*}
Then, the delayed-diffusion problem \eqref{eq:delayed-diffusion-Vstar} is an instance of \eqref{eq:problem}, where one may take
\begin{align*}
  \alpha_0=\lambda_0,
  \quad
  \Lambda_i=
  \norm{B_i}_{L^\infty(\Omega;\mathcal L(\R^d))},
  \quad i\in\{0,1\}.
\end{align*}
\end{proposition}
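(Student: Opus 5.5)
The two form estimates follow pointwise from the definition of the operator norm in \eqref{eq:coefficient-operator-norm}. After that, it only remains to check that the rigorous formulation \eqref{eq:delayed-diffusion-Vstar} coincides with \eqref{eq:problem}.

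\emph{Boundedness.} For $w,v\in V$ and almost every $x\in\Omega$, the Cauchy--Schwarz inequality in $\R^d$ and \eqref{eq:coefficient-operator-norm} give
\begin{align*}
  |B_i(x)\nabla w(x)\cdot\nabla v(x)|
  \le |B_i(x)\nabla w(x)|\,|\nabla v(x)|
  \le \norm{B_i}_{L^\infty(\Omega;\mathcal L(\R^d))}|\nabla w(x)|\,|\nabla v(x)|.
\end{align*}
I will integrate over $\Omega$ and apply the Cauchy--Schwarz inequality in $L^2(\Omega)$. With the chosen norm $\norm{v}_V=\norm{\nabla v}_{L^2(\Omega;\R^d)}$, this yields the bound on $|a_i(w,v)|$, so $\Lambda_i=\norm{B_i}_{L^\infty(\Omega;\mathcal L(\R^d))}$. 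Measurability of the integrand is immediate, since $B_i\in L^\infty$ and $\nabla w,\nabla v\in L^2$.

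\emph{Coercivity.} I will apply \eqref{eq:B0-ellipticity} with $\xi=\nabla v(x)$ at almost every $x$ and integrate. This gives $a_0(v,v)\ge\lambda_0\norm{\nabla v}_{L^2}^2=\lambda_0\norm{v}_V^2$, so $\alpha_0=\lambda_0$. Symmetry of $B_0$ is not needed, because only the quadratic form $\xi^\top B_0\xi$ enters.

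\emph{Identification with the abstract problem.} The setting $H=L^2(\Omega)$, $V=H_0^1(\Omega)$ is a Gelfand triple of separable Hilbert spaces: $V$ embeds continuously by Poincar\'e and densely because $C_c^\infty(\Omega)$ is dense in $L^2(\Omega)$. The forms therefore satisfy \eqref{eq:form-bounded}--\eqref{eq:a0-coercive}, and the operators $A_i$ they induce are exactly those appearing in \eqref{eq:delayed-diffusion-Vstar}. Hence \eqref{eq:delayed-diffusion-Vstar} is literally \eqref{eq:problem}.

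The equivalence with the tested form \eqref{eq:delayed-diffusion-weak} is the only step needing a word. Pairing the Bochner integral $\K_\mu\tilde u(t)$ with a fixed $v$ commutes with the integral, because $\phi\mapsto\pair{\phi}{v}$ is a bounded linear functional on $V^*$. Since $V$ is separable, testing on a countable dense set makes the exceptional null set of times independent of $v$. I expect no real obstacle; the mildest subtlety is this interchange of the pairing with the measure integral, which is standard for Bochner integrals.
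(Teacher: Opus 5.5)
Your proposal is correct and follows the paper's proof essentially verbatim: a pointwise operator-norm bound, then integration and Cauchy--Schwarz, gives boundedness with $\Lambda_i=\norm{B_i}_{L^\infty(\Omega;\mathcal L(\R^d))}$, and choosing $\xi=\nabla v(x)$ in \eqref{eq:B0-ellipticity} gives coercivity with $\alpha_0=\lambda_0$. Your additional checks are sound and only make explicit what the paper takes for granted: that $H_0^1(\Omega)\hookrightarrow L^2(\Omega)$ is a Gelfand triple, and that the duality pairing commutes with the measure integral.
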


\begin{proof}
For almost every $x\in\Omega$,
\begin{align*}
  |B_i(x)\nabla w(x)\cdot\nabla v(x)|
  \leq
  \norm{B_i(x)}_{\mathcal L(\R^d)}
  |\nabla w(x)|\,|\nabla v(x)|.
\end{align*}
Integration and the Cauchy--Schwarz inequality give the boundedness of $a_i$.  Taking $\xi=\nabla v(x)$ in \eqref{eq:B0-ellipticity} and integrating gives
\begin{align*}
  a_0(v,v)
  =\int_\Omega (B_0\nabla v)\cdot\nabla v\,\dd x
  \geq \lambda_0\norm{\nabla v}_{L^2(\Omega;\R^d)}^2.
\end{align*}
The operators generated by the two forms, therefore, satisfy the abstract hypotheses in \eqref{eq:form-bounded}--\eqref{eq:a0-coercive}, and \eqref{eq:delayed-diffusion-Vstar} is exactly \eqref{eq:problem} in the present choice of $V$, $H$, and $V^*$.
\end{proof}

\subsection{Why the delayed elliptic operator is not pivot-space bounded}
The next observation is central to the scope of the paper.  A reaction delay may act boundedly on $L^2(\Omega)$, whereas a delayed diffusion flux need not.

\begin{proposition}[Failure of an $H\to V^*$ extension]\label[proposition]{prop:not-H-bounded}
Take $B_1=I$.  The variational Dirichlet Laplacian
\begin{align*}
  A_1=-\Delta:H_0^1(\Omega)\longrightarrow H^{-1}(\Omega)
\end{align*}
does not admit an extension
\begin{align*}
  \widetilde A_1\in
  \mathcal L\bigl(L^2(\Omega),H^{-1}(\Omega)\bigr)
\end{align*}
whose restriction to $H_0^1(\Omega)$ agrees with $A_1$.  Consequently, if $\iota:L^2(\Omega)\hookrightarrow H^{-1}(\Omega)$ denotes the canonical embedding, there is also no operator
\begin{align*}
  \widehat A_1\in
  \mathcal L\bigl(L^2(\Omega),L^2(\Omega)\bigr)
\end{align*}
such that
\begin{align*}
  \iota\widehat A_1v=A_1v
  \quad\text{in }H^{-1}(\Omega)
  \quad\text{for any }v\in H_0^1(\Omega).
\end{align*}
\end{proposition}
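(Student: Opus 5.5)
The plan is a proof by contradiction. Suppose $\widetilde A_1\in\mathcal L(L^2(\Omega),H^{-1}(\Omega))$ extends $A_1$. Then there is a constant $C$ with
\begin{align*}
  \norm{\nabla v}_{L^2(\Omega;\R^d)}^2
  =\pair{A_1v}{v}
  \leq\norm{A_1v}_{H^{-1}(\Omega)}\norm{v}_V
  \leq C\norm{v}_{L^2(\Omega)}\norm{\nabla v}_{L^2(\Omega;\R^d)}
\end{align*}
for every $v\in H_0^1(\Omega)$. Here we use the norm $\norm{v}_V=\norm{\nabla v}_{L^2}$ fixed at the start of the section, and the identity $\norm{A_1v}_{V^*}=\norm{v}_V$ (Riesz isometry). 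It follows that
\begin{align*}
  \norm{\nabla v}_{L^2(\Omega;\R^d)}\leq C\norm{v}_{L^2(\Omega)}
\end{align*}
for all $v\in H_0^1(\Omega)$, which is a reverse Poincaré inequality.

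The key step is to exhibit functions that violate this inequality, which I expect to be the only substantive point. I would fix a ball $B_r(x_0)\subset\Omega$ and a nonzero $\phi\in C_c^\infty(B_r(x_0))$. For $k\in\mathbb N$, set $v_k(x):=\phi(x)\sin(k x_1)$. Then $v_k\in H_0^1(\Omega)$ and $\norm{v_k}_{L^2}\leq\norm{\phi}_{L^2}$. On the other hand,
\begin{align*}
  \nabla v_k=\sin(kx_1)\nabla\phi+k\cos(kx_1)\phi\,e_1,
\end{align*}
so $\norm{\nabla v_k}_{L^2}\geq k\norm{\cos(k x_1)\phi}_{L^2}-\norm{\nabla\phi}_{L^2}$. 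By the Riemann--Lebesgue lemma, $\cos^2(kx_1)=\tfrac12(1+\cos(2kx_1))$ gives $\norm{\cos(kx_1)\phi}_{L^2}^2\to\tfrac12\norm{\phi}_{L^2}^2>0$. Hence $\norm{\nabla v_k}_{L^2}\sim k/\sqrt2\,\norm{\phi}_{L^2}\to\infty$ while $\norm{v_k}_{L^2}$ stays bounded, which contradicts the inequality above.

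A simpler alternative is to use the Dirichlet eigenfunctions $e_k$ of $-\Delta$. These satisfy $\norm{\nabla e_k}^2=\lambda_k\norm{e_k}^2$ with $\lambda_k\to\infty$, which needs only compactness of $H_0^1\hookrightarrow L^2$ (Rellich, valid for bounded $\Omega$). Either route works, but the explicit oscillation avoids spectral theory.

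For the second assertion, suppose $\widehat A_1\in\mathcal L(L^2,L^2)$ satisfies $\iota\widehat A_1v=A_1v$ on $H_0^1$. Then $\widetilde A_1:=\iota\widehat A_1$ belongs to $\mathcal L(L^2(\Omega),H^{-1}(\Omega))$, because $\iota$ is bounded by the continuity of the embedding $V\hookrightarrow H$ together with duality. This $\widetilde A_1$ extends $A_1$, contradicting the first part.
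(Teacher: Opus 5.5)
Your proof is correct. Its overall structure matches the paper's: you contradict boundedness with a sequence that stays bounded in $L^2(\Omega)$ while its image under $A_1$ blows up in $H^{-1}(\Omega)$, and you note that $\iota\widehat A_1$ would itself be such a bounded extension.

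The difference is in how the bad sequence is produced. The paper takes $L^2$-normalised weak Dirichlet eigenfunctions $\phi_n$, which needs the compact embedding $H_0^1(\Omega)\hookrightarrow L^2(\Omega)$ and the spectral theorem. It then computes, via the Riesz isometry, $\norm{A_1\phi_n}_{H^{-1}(\Omega)}=\norm{\nabla\phi_n}_{L^2(\Omega;\R^d)}=\lambda_n^{1/2}$.

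You instead reduce to a reverse Poincar\'e inequality through $\norm{\nabla v}^2=\pair{A_1v}{v}\le\norm{A_1v}_{H^{-1}(\Omega)}\norm{\nabla v}$. This chain uses only the definition of the dual norm, so the Riesz isometry you cite is not actually needed there. You then violate the inequality with the localised oscillations $\phi(x)\sin(kx_1)$. The Riemann--Lebesgue lemma, or a single integration by parts in $x_1$, gives $\norm{\cos(kx_1)\phi}_{L^2}^2\to\tfrac12\norm{\phi}_{L^2}^2$.

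What each route buys:
\begin{itemize}
\item Your route is more elementary and purely local: it needs neither the compact embedding nor spectral theory.
\item Because your reduction uses only the lower bound $\pair{A_1v}{v}\ge\norm{\nabla v}^2$, rather than the exact isometry special to $B_1=I$, it carries over essentially verbatim to any uniformly elliptic $B_1$.
\item The paper's route is shorter once the spectral theorem is taken as known, and it gives the exact growth rate $\lambda_n^{1/2}$ along an $L^2$-orthonormal family.
\end{itemize}
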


\begin{proof}
Because $\Omega$ is bounded and Lipschitz, the embedding $H_0^1(\Omega)\hookrightarrow L^2(\Omega)$ is compact.  By the compact embedding $H_0^1(\Omega)\hookrightarrow L^2(\Omega)$ and the spectral theorem for the Dirichlet Laplacian, there exist weak eigenpairs $(\phi_n,\lambda_n)$ satisfying
\begin{align*}
  \phi_n\in H_0^1(\Omega),\qquad
  \norm{\phi_n}_{L^2(\Omega)}=1,\qquad
  \lambda_n\longrightarrow\infty,
\end{align*}
and
\begin{align}\label{eq:weak-dirichlet-eigenproblem}
  \int_\Omega\nabla\phi_n\cdot\nabla v\,\dd x
  =\lambda_n\int_\Omega\phi_nv\,\dd x
  \quad\forall v\in H_0^1(\Omega).
\end{align}
Equip $H^{-1}(\Omega)$ with the dual norm induced by $\norm{v}_V=\norm{\nabla v}_{L^2(\Omega;\R^d)}$.  Because the variational Laplacian is the Riesz map associated with the gradient inner product,
\begin{align*}
  \norm{A_1\phi_n}_{H^{-1}(\Omega)}
  &=\sup_{0\ne v\in H_0^1(\Omega)}
    \frac{\left|\int_\Omega\nabla\phi_n\cdot\nabla v\,\dd x\right|}
         {\norm{\nabla v}_{L^2(\Omega;\R^d)}}
  =\norm{\nabla\phi_n}_{L^2(\Omega;\R^d)}.
\end{align*}
Testing \eqref{eq:weak-dirichlet-eigenproblem} with $v=\phi_n$ gives
\begin{align*}
  \norm{\nabla\phi_n}_{L^2(\Omega;\R^d)}^2=\lambda_n,
\end{align*}
so
\begin{align*}
  \norm{A_1\phi_n}_{H^{-1}(\Omega)}=\lambda_n^{1/2}\longrightarrow\infty,
  \quad
  \norm{\phi_n}_{L^2(\Omega)}=1.
\end{align*}
This excludes a bounded extension from $L^2(\Omega)$ to $H^{-1}(\Omega)$. Indeed, any such extension would satisfy
\begin{align*}
\norm{A_1\phi_n}_{H^{-1}(\Omega)}
\leq
\norm{\widetilde A_1}_{\mathcal L(L^2(\Omega),H^{-1}(\Omega))}
\norm{\phi_n}_{L^2(\Omega)},
\end{align*}
which contradicts the preceding divergence. Finally, if an operator $\widehat A_1$ with the stated compatibility property existed, then the composition $\iota\widehat A_1$ would belong to $\mathcal L(L^2(\Omega),H^{-1}(\Omega))$ and would be the extension just excluded.
\end{proof}

\begin{remark}[What is gained by the energy-space formulation]
For $B_1=I$, the history term is a measure superposition of delayed variational Laplacians.  The joined trajectory needs only one spatial derivative in the $L^2$ sense, and the equation is solved in $H^{-1}(\Omega)$.  Replacing the history space by $\mathcal C([-\tau,0];L^2(\Omega))$ would provide time continuity but would not by itself give the spatial $H_0^1(\Omega)$-regularity needed to apply the variational Laplacian
\begin{align*}
  -\Delta:H_0^1(\Omega)\longrightarrow H^{-1}(\Omega).
\end{align*}
Additional spatial regularity would therefore be required.  Such regularity is not supplied by the basic energy estimate and is not part of the weak parabolic solution concept used here.
\end{remark}

\subsection{Well-posedness for delayed diffusion}
We present the direct PDE consequences of the abstract results.  They are stated separately so that the hypotheses can be checked without translating between the strong and variational notations.

\begin{theorem}[Well-posedness of the delayed-diffusion problem]\label[theorem]{thm:pde-wellposedness}
Assume \eqref{eq:B0-ellipticity}, let $\mu\in\M^0([0,\tau])$, and take
\begin{align*}
  u_0&\in L^2(\Omega), \quad
  f \in L^2(0,\Tend;H^{-1}(\Omega)), \quad
  \psi \in L^2(-\tau,0;H_0^1(\Omega)).
\end{align*}
Then, \eqref{eq:delayed-diffusion-Vstar}, equivalently \eqref{eq:delayed-diffusion-weak}, has a unique solution
\begin{align*}
  u&\in L^2(0,\Tend;H_0^1(\Omega)),\quad
  \partial_tu \in L^2(0,\Tend;H^{-1}(\Omega)),\quad
  u \in \mathcal{C}([0,\Tend];L^2(\Omega)).
\end{align*}
If $\beta\ge0$ is chosen so that
\begin{align}\label{eq:pde-weight-condition}
  \norm{B_1}_{L^\infty(\Omega;\mathcal L(\R^d))}
  \kappa_\mu(\beta)<\lambda_0,
\end{align}
then
\begin{align}\label{eq:pde-weighted-bound}
  \norm{u}_{L^2_\beta(0,\Tend;H_0^1(\Omega))}
  \le \frac{1}{1-q_{\mu,\beta}}
  \Biggl [&\lambda_0^{-1/2}\norm{u_0}_{L^2(\Omega)}
   +\lambda_0^{-1}\norm{f}_{L^2_\beta(0,\Tend;H^{-1}(\Omega))}\notag\\
   &+\lambda_0^{-1}
     \norm{B_1}_{L^\infty(\Omega;\mathcal L(\R^d))}
     \norm{\mu}_{\M([0,\tau])}\notag\\
   &\quad\times
     \norm{\psi}_{L^2(-\tau,0;H_0^1(\Omega))}\Biggr],
\end{align}
where
\begin{align*}
  q_{\mu,\beta}
  :=\frac{
  \norm{B_1}_{L^\infty(\Omega;\mathcal L(\R^d))}
  \kappa_\mu(\beta)}{\lambda_0}.
\end{align*}
For each fixed $\mu\in\M^0([0,\tau])$, a value of $\beta$ satisfying \eqref{eq:pde-weight-condition} exists without any smallness assumption on $\norm{\mu}_{\M([0,\tau])}$.  The required value of $\beta$ may depend on $\mu$, including its total variation and the distribution of $|\mu|$ near the origin.
\end{theorem}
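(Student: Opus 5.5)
The proof is a direct specialisation of the abstract theory, so the plan is to verify the hypotheses and then read off the conclusions. First, by \Cref{prop:pde-assumptions}, the forms \eqref{eq:diffusion-forms} satisfy \eqref{eq:form-bounded}--\eqref{eq:a0-coercive} with
\[
\alpha_0=\lambda_0,\qquad \Lambda_i=\norm{B_i}_{L^\infty(\Omega;\mathcal L(\R^d))}
\]
on the triple $H_0^1(\Omega)\hookrightarrow L^2(\Omega)\hookrightarrow H^{-1}(\Omega)$. Here $V$ carries the gradient norm, which is a Hilbert norm by Poincar\'e, and $V$ is separable. The data $u_0,f,\psi$ then lie exactly in the spaces required in \Cref{sec:setting}. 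Moreover, \eqref{eq:delayed-diffusion-Vstar} is literally \eqref{eq:problem}, and \eqref{eq:delayed-diffusion-weak} is the equivalent tested form from \Cref{def:weak-solution}.

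Second, with these identifications the quantity $q_{\mu,\beta}$ in the statement coincides with \eqref{eq:contraction-condition}. Condition \eqref{eq:pde-weight-condition} is therefore precisely $q_{\mu,\beta}<1$. Existence of such a $\beta$ follows from \eqref{eq:kappa-decay}, since $\mu\in\M^0([0,\tau])$; equivalently, \Cref{lem:kappa-properties} gives $\kappa_\mu(\beta)\to|\mu|(\{0\})=0$. Because $\kappa_\mu(\beta)$ is non-increasing and only its limit matters, no smallness of $\norm{\mu}_{\M([0,\tau])}$ is needed. The threshold $\beta$ depends on how much of $|\mu|$ sits near $0$ and on the mass, which justifies the final sentence of the statement.

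Third, applying \Cref{thm:wellposedness} yields a unique weak solution $u\in\W(0,\Tend)$, i.e.\ $u\in L^2(0,\Tend;H_0^1(\Omega))$ with $\partial_tu\in L^2(0,\Tend;H^{-1}(\Omega))$. The Lions--Magenes embedding $\W(0,\Tend)\hookrightarrow\mathcal C([0,\Tend];L^2(\Omega))$ gives the continuity assertion. Substituting $\alpha_0=\lambda_0$ and $\Lambda_1=\norm{B_1}_{L^\infty}$ into \eqref{eq:weighted-u-bound} gives \eqref{eq:pde-weighted-bound} verbatim.

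Uniqueness is stated independently of $\beta$, but the uniqueness argument in \Cref{thm:wellposedness} is among all weak solutions in $\W(0,\Tend)$ once a single admissible $\beta$ exists. The only point requiring care, and thus the main obstacle, is this bookkeeping: ensuring that the dual norm on $H^{-1}(\Omega)$ is the one induced by the gradient norm, so that the constants match exactly. No genuine analytic difficulty is expected.
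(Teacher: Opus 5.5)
Your proposal is correct and follows the paper's own proof step by step. You verify the hypotheses via \Cref{prop:pde-assumptions} with $\alpha_0=\lambda_0$ and $\Lambda_i=\norm{B_i}_{L^\infty(\Omega;\mathcal L(\R^d))}$, identify \eqref{eq:pde-weight-condition} with $q_{\mu,\beta}<1$, invoke \Cref{thm:wellposedness} and the Lions--Magenes embedding, and obtain the existence of $\beta$ from \Cref{lem:kappa-properties} — and your remark that uniqueness in $\W(0,\Tend)$ does not depend on the particular admissible $\beta$ matches how the paper's uniqueness argument works.
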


\begin{proof}
From \Cref{prop:pde-assumptions}, the abstract hypotheses \eqref{eq:form-bounded}--\eqref{eq:a0-coercive} hold with
\begin{align*}
  V=H_0^1(\Omega),\quad
  H=L^2(\Omega),\quad
  V^*=H^{-1}(\Omega),
\end{align*}
and one may take
\begin{align*}
  \alpha_0=\lambda_0,\qquad
  \Lambda_i=
  \norm{B_i}_{L^\infty(\Omega;\mathcal L(\R^d))},
  \quad i\in\{0,1\}.
\end{align*}
Condition \eqref{eq:pde-weight-condition} is therefore equivalent to $q_{\mu,\beta}<1$. Therefore, \Cref{thm:wellposedness} gives a unique solution
\begin{align*}
  u\in\W(0,\Tend)
\end{align*}
and yields \eqref{eq:pde-weighted-bound}.  The embedding
\begin{align*}
  \W(0,\Tend)\hookrightarrow
  \mathcal C([0,\Tend];L^2(\Omega))
\end{align*}
gives the stated time continuity; the corresponding full-energy estimate also follows from \Cref{cor:full-energy}. Finally, because $\mu\in\M^0([0,\tau])$, one has $|\mu|(\{0\})=0$.  Thus, \Cref{lem:kappa-properties} gives $\kappa_\mu(\beta)\to0$ as $\beta\to\infty$, so a value of $\beta$ satisfying \eqref{eq:pde-weight-condition} exists for any fixed kernel.
\end{proof}


\section{Stability with respect to the kernel}\label{sec:stability}
This section studies the dependence of the solution on the delay kernel.  By rewriting the difference of two solutions as a residual equation, we obtain stability in total variation and, for non-negative kernels, strong convergence under narrow convergence.

\subsection{The residual principle}
The kernel perturbation is evaluated along a single reference trajectory. This asymmetric formulation is useful: the kernel on the left determines the weighted resolvent margin, whereas the kernel on the right provides the reference trajectory whose translations are compared.

Let $\mu,\nu\in\M^0([0,\tau])$, and let $u_\mu,u_\nu$ denote the solutions \eqref{eq:problem} with the same data $(u_0,f,\psi)$.  We abbreviate
\begin{align}\label{eq:resolvent-gap}
  d_{\mu,\beta}:=\alpha_0-\Lambda_1\kappa_\mu(\beta).
\end{align}
The quantity $d_{\mu,\beta}$ is the positive margin left after the weighted causal perturbation has been absorbed.

\begin{theorem}[Residual kernel-stability estimate]\label[theorem]{thm:residual-stability}
Let $\beta \geq 0$ satisfy $d_{\mu,\beta}>0$. We set
\begin{align*}
R_{\mu,\nu}:=\K_{\mu-\nu}\tilde u_\nu.
\end{align*}
Then, $R_{\mu,\nu}\in L^2_\beta(0,\Tend;V^*)$ and
\begin{align}
  \norm{u_\mu-u_\nu}_{L^2_\beta(0,\Tend;V)}
  &\leq d_{\mu,\beta}^{-1}
  \norm{R_{\mu,\nu}}_{L^2_\beta(0,\Tend;V^*)},
  \label{eq:residual-stability}\\
  \sup_{0\le t\le \Tend}e^{-\beta t}
  \norm{u_\mu(t)-u_\nu(t)}_H
  &\leq \frac{\sqrt{\alpha_0}}{d_{\mu,\beta}}
  \norm{R_{\mu,\nu}}_{L^2_\beta(0,\Tend;V^*)},
  \label{eq:residual-H}\\
  \norm{\partial_tu_\mu-\partial_tu_\nu}_{L^2_\beta(0,\Tend;V^*)}
  &\leq \frac{\alpha_0+\Lambda_0}{d_{\mu,\beta}}
  \norm{R_{\mu,\nu}}_{L^2_\beta(0,\Tend;V^*)}.
  \label{eq:residual-dt}
\end{align}
Consequently,
\begin{align}
  &\norm{u_\mu-u_\nu}_{\W(0,\Tend)}
  +\norm{u_\mu-u_\nu}_{C([0,\Tend];H)} \notag\\
  &\quad \leq
  e^{\beta \Tend}
  \frac{1+\sqrt{\alpha_0}+\alpha_0+\Lambda_0}
  {d_{\mu,\beta}}
  \norm{R_{\mu,\nu}}_{L^2_\beta(0,\Tend;V^*)}. \label{eq:residual-full-unweighted}
\end{align}
\end{theorem}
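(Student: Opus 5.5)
The plan is to write the difference $e:=u_\mu-u_\nu$ as the solution of a residual equation driven by the causal operator of $\mu$, and then reuse the weighted parabolic estimate of \Cref{lem:weighted-parabolic} together with the weighted causal bound of \Cref{lem:weighted-causal}.

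\emph{Step 1: the residual equation.} Both solutions share $(u_0,f,\psi)$, so $e(0)=0$ and the prehistory of $\tilde e:=\tilde u_\mu-\tilde u_\nu$ vanishes; thus $\tilde e=\E_+e$. By linearity of $\sigma\mapsto\K_\sigma$ and of $\K_\sigma$ in its argument (\Cref{lem:measure-convolution}),
\begin{align*}
\K_\mu\tilde u_\mu-\K_\nu\tilde u_\nu
=\K_\mu(\tilde u_\mu-\tilde u_\nu)+\K_{\mu-\nu}\tilde u_\nu
=\K_\mu^+e+R_{\mu,\nu}.
\end{align*}
Hence $\partial_te+A_0e=-\K_\mu^+e-R_{\mu,\nu}$ in $L^2(0,\Tend;V^*)$, with $e(0)=0$. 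Membership $R_{\mu,\nu}\in L^2_\beta$ follows from \eqref{eq:measure-convolution-bound} with $\sigma=\mu-\nu$ and $g=\tilde u_\nu$, together with \Cref{lem:weighted-equivalence}.

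\emph{Step 2: the $L^2_\beta(V)$ bound.} I would not use the square-root form \eqref{eq:parabolic-Lipschitz}, since absorbing through it gives the factor $(1-q)^{-1}\alpha_0^{-1}=d_{\mu,\beta}^{-1}$ only by coincidence. It works here because that product is exactly $1/(\alpha_0-\Lambda_1\kappa_\mu)$. Concretely, \eqref{eq:parabolic-Lipschitz} and \eqref{eq:weighted-causal} give
\begin{align*}
\norm{e}\le\alpha_0^{-1}\bigl(\Lambda_1\kappa_\mu(\beta)\norm{e}+\norm{R}\bigr).
\end{align*}
Rearranging yields $(\alpha_0-\Lambda_1\kappa_\mu)\norm{e}\le\norm{R}$, which is \eqref{eq:residual-stability}. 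Alternatively, one can test the transformed equation for $z=e^{-\beta t}e$ directly, which leads to the same bound.

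\emph{Step 3: the sup and time-derivative bounds.} For \eqref{eq:residual-H}, I apply \eqref{eq:weighted-parabolic-sup} with $y_0=0$ and $g=-\K_\mu^+e-R$:
\begin{align*}
\sup e^{-\beta t}\norm{e(t)}_H\le\alpha_0^{-1/2}\norm{g}\le\alpha_0^{-1/2}\bigl(\Lambda_1\kappa_\mu\,d^{-1}+1\bigr)\norm{R}=\alpha_0^{-1/2}\,\frac{\alpha_0}{d}\,\norm{R}=\frac{\sqrt{\alpha_0}}{d}\norm{R}.
\end{align*}
For \eqref{eq:residual-dt}, I read $\partial_te=-A_0e+g$ from the equation. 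This gives the bound $\Lambda_0\norm{e}+\norm{g}\le(\Lambda_0/d+\alpha_0/d)\norm{R}$.

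\emph{Step 4: the unweighted estimate.} Applying \Cref{lem:weighted-equivalence} to each weighted norm costs a factor $e^{\beta\Tend}$. Bounding the $\W$-norm by the sum of its two components and summing the three estimates gives \eqref{eq:residual-full-unweighted}.

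The only delicate point is Step 1, namely checking that the prehistory cancels. This requires both solutions to use the same $\psi$ and the representative independence from \Cref{lem:joined-trajectory}. Once that is in place, everything else reduces to bookkeeping of constants.
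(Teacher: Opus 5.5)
Your proposal is correct and follows the paper's proof essentially step for step. It uses the same splitting $\K_\mu\tilde u_\mu-\K_\nu\tilde u_\nu=\K_\mu^+e+R_{\mu,\nu}$, absorption via \eqref{eq:parabolic-Lipschitz} and \eqref{eq:weighted-causal}, the bound $\norm{\K_\mu^+e+R_{\mu,\nu}}_{L^2_\beta(0,\Tend;V^*)}\le(\alpha_0/d_{\mu,\beta})\norm{R_{\mu,\nu}}_{L^2_\beta(0,\Tend;V^*)}$ fed into \eqref{eq:weighted-parabolic-sup} and into the equation for $\partial_te$, and the factor $e^{\beta\Tend}$ from \Cref{lem:weighted-equivalence}; the hedging at the start of Step 2 is unnecessary, since $\alpha_0(1-q_{\mu,\beta})=d_{\mu,\beta}$ is an exact identity, not a coincidence.
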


\begin{proof}
By \Cref{lem:measure-convolution}, $R_{\mu,\nu}$ belongs to $L^2(0,\Tend;V^*)$ and hence, by \Cref{lem:weighted-equivalence}, to $L^2_\beta(0,\Tend;V^*)$.  We set $e:=u_\mu-u_\nu$. Because the data and the prescribed past coincide,
\begin{align*}
  \tilde u_\mu-\tilde u_\nu=\E_+e
  \quad\text{in }L^2(-\tau,\Tend;V),
\end{align*}
and $e(0)=0$ in $H$.  The bounded linearity of the history operators therefore gives
\begin{align*}
  \K_\mu\tilde u_\mu-\K_\nu\tilde u_\nu
  &=\K_\mu(\tilde u_\mu-\tilde u_\nu)
    +\K_{\mu-\nu}\tilde u_\nu 
  =\K_\mu^+e+R_{\mu,\nu}
  \quad\text{in }L^2(0,\Tend;V^*).
\end{align*}
Subtracting the two evolution equations yields
\begin{align}\label{eq:error-equation}
  \partial_te+A_0e+\K_\mu^+e=-R_{\mu,\nu}
  \quad\text{in }L^2(0,\Tend;V^*).
\end{align}
Applying \eqref{eq:parabolic-Lipschitz} to \eqref{eq:error-equation}, followed by \eqref{eq:weighted-causal}, gives
\begin{align*}
  \norm{e}_{L^2_\beta(0,\Tend;V)}
  \le \alpha_0^{-1}
  \left(
    \Lambda_1\kappa_\mu(\beta)
    \norm{e}_{L^2_\beta(0,\Tend;V)}
    +\norm{R_{\mu,\nu}}_{L^2_\beta(0,\Tend;V^*)}
  \right).
\end{align*}
Absorption proves \eqref{eq:residual-stability}.

To obtain the remaining estimates, write
\begin{align*}
  \partial_te+A_0e=F,
  \quad
  F:=-\K_\mu^+e-R_{\mu,\nu}.
\end{align*}
Using \eqref{eq:weighted-causal} and \eqref{eq:residual-stability},
\begin{align*}
  \norm{F}_{L^2_\beta(0,\Tend;V^*)}
  &\le \Lambda_1\kappa_\mu(\beta)
  \norm{e}_{L^2_\beta(0,\Tend;V)}
  +\norm{R_{\mu,\nu}}_{L^2_\beta(0,\Tend;V^*)}\\
  &\le
  \left(
    \frac{\Lambda_1\kappa_\mu(\beta)}{d_{\mu,\beta}}+1
  \right)
  \norm{R_{\mu,\nu}}_{L^2_\beta(0,\Tend;V^*)}\\
  &=\frac{\alpha_0}{d_{\mu,\beta}}
  \norm{R_{\mu,\nu}}_{L^2_\beta(0,\Tend;V^*)}.
\end{align*}
The weighted parabolic estimate \eqref{eq:weighted-parabolic-sup}, with zero initial value, now gives
\begin{align*}
  \sup_{0\le t\le \Tend}e^{-2\beta t}\norm{e(t)}_H^2
  \leq \alpha_0^{-1}
  \norm{F}_{L^2_\beta(0,\Tend;V^*)}^2,
\end{align*}
which proves \eqref{eq:residual-H}.  Finally, \eqref{eq:error-equation}, the boundedness of $A_0$, \eqref{eq:weighted-causal}, and \eqref{eq:residual-stability} yield
\begin{align*}
  \norm{\partial_te}_{L^2_\beta(0,\Tend;V^*)}
  &\le \Lambda_0\norm{e}_{L^2_\beta(0,\Tend;V)}
  +\Lambda_1\kappa_\mu(\beta)
  \norm{e}_{L^2_\beta(0,\Tend;V)}
  +\norm{R_{\mu,\nu}}_{L^2_\beta(0,\Tend;V^*)}\\
  &\le \frac{\alpha_0+\Lambda_0}{d_{\mu,\beta}}
  \norm{R_{\mu,\nu}}_{L^2_\beta(0,\Tend;V^*)},
\end{align*}
which proves \eqref{eq:residual-dt}.  Applying the norm equivalences of \Cref{lem:weighted-equivalence} separately to $e$ and $\partial_te$, and using
\begin{align*}
  \sup_{0\le t\le\Tend}\norm{e(t)}_H
  \leq e^{\beta\Tend}
  \sup_{0\le t\le\Tend}e^{-\beta t}\norm{e(t)}_H,
\end{align*}
proves \eqref{eq:residual-full-unweighted}.
\end{proof}

\begin{corollary}[Total-variation stability]\label[corollary]{cor:tv-stability}
Under the assumptions of \Cref{thm:residual-stability},
\begin{align}\label{eq:tv-stability}
  \norm{u_\mu-u_\nu}_{L^2_\beta(0,\Tend;V)}
  \leq
  \frac{\Lambda_1\norm{\mu-\nu}_{\M([0,\tau])}}
  {\alpha_0-\Lambda_1\kappa_\mu(\beta)}
  \norm{\tilde u_\nu}_{L^2(-\tau,\Tend;V)}.
\end{align}
Furthermore, let $\mu_n,\mu\in\M^0([0,\tau])$ satisfy
\begin{align*}
  \norm{\mu_n-\mu}_{\M([0,\tau])}\longrightarrow0,
\end{align*}
and let $u_{\mu_n}$ and $u_\mu$ be the solutions with common data.  Then,
\begin{align*}
  u_{\mu_n}\longrightarrow u_\mu
  \quad\text{in }\W(0,\Tend)\cap \mathcal{C}([0,\Tend];H).
\end{align*}
\end{corollary}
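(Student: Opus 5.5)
The plan is to combine the residual estimate \eqref{eq:residual-stability} of \Cref{thm:residual-stability} with a direct bound on the residual $R_{\mu,\nu}=\K_{\mu-\nu}\tilde u_\nu$. Since $\mu-\nu\in\M([0,\tau])$ is a finite signed measure, \Cref{lem:measure-convolution} applied with $\sigma=\mu-\nu$ and $g=\tilde u_\nu$ gives
\begin{align*}
  \norm{R_{\mu,\nu}}_{L^2(0,\Tend;V^*)}
  \le \Lambda_1\norm{\mu-\nu}_{\M([0,\tau])}\norm{\tilde u_\nu}_{L^2(-\tau,\Tend;V)}.
\end{align*}
By \Cref{lem:weighted-equivalence}, the $L^2_\beta$-norm is dominated by the unweighted norm, since $e^{-\beta t}\le1$. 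Inserting this bound into \eqref{eq:residual-stability} and recalling that $d_{\mu,\beta}=\alpha_0-\Lambda_1\kappa_\mu(\beta)$ yields \eqref{eq:tv-stability} directly. No weighting of the residual is needed here, because the full mass $\norm{\mu-\nu}$ is allowed to appear.

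For the convergence statement, I fix $\beta$ so that $\Lambda_1\kappa_\mu(\beta)<\alpha_0$. Such a $\beta$ exists by \eqref{eq:kappa-decay}. I then apply the residual theorem with the fixed limit kernel $\mu$ on the left and $\nu=\mu_n$ on the right. This asymmetric choice avoids any need for uniform contraction margins over the sequence. Combining \eqref{eq:residual-full-unweighted} with the residual bound above gives
\begin{align*}
  \norm{u_\mu-u_{\mu_n}}_{\W(0,\Tend)}+\norm{u_\mu-u_{\mu_n}}_{\mathcal C([0,\Tend];H)}
  \le C\,\norm{\mu-\mu_n}_{\M([0,\tau])}\norm{\tilde u_{\mu_n}}_{L^2(-\tau,\Tend;V)},
\end{align*}
where $C$ depends only on $\alpha_0,\Lambda_0,\Lambda_1,\beta,\Tend$ and $d_{\mu,\beta}$.

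The only step requiring care is a uniform bound on $\norm{\tilde u_{\mu_n}}_{L^2(-\tau,\Tend;V)}$. By \Cref{lem:joined-trajectory}, this norm is controlled by $\norm{\psi}$ plus $\norm{u_{\mu_n}}_{L^2(0,\Tend;V)}$. I plan to get the bound from \Cref{cor:uniform-family} applied to the family $\{\mu_n\}$. The needed estimates are
\begin{align*}
  \kappa_{\mu_n}(\beta)\le\kappa_\mu(\beta)+\norm{\mu_n-\mu}_{\M([0,\tau])},
\end{align*}
which holds because $|\mu_n|\le|\mu|+|\mu_n-\mu|$ and $e^{-\beta s}\le1$, and $\norm{\mu_n}\le\norm{\mu}+\norm{\mu_n-\mu}$. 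For $n$ large, $q_{\mu_n,\beta}\le q_*:=\tfrac12(1+q_{\mu,\beta})<1$, and the masses are uniformly bounded. The finitely many remaining indices do not affect convergence.

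As an alternative that avoids uniform bounds entirely, I can swap roles: take $\nu=\mu$ as the reference trajectory and $\mu_n$ on the left. This gives a residual bounded by the fixed quantity $\norm{\tilde u_\mu}$. The margin $d_{\mu_n,\beta}\ge d_{\mu,\beta}-\Lambda_1\norm{\mu_n-\mu}$ is then bounded below for large $n$. Either route gives the bound $O(\norm{\mu_n-\mu})$, and hence convergence in $\W(0,\Tend)\cap\mathcal C([0,\Tend];H)$.
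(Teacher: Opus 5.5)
Your proof is correct. The first estimate is obtained exactly as in the paper. Your ``alternative'' route for the sequential assertion is also the paper's proof: $\mu_n$ on the left, the fixed trajectory $\tilde u_\mu$ as reference, and $d_{\mu_n,\beta}\ge d_{\mu,\beta}-\Lambda_1\norm{\mu_n-\mu}_{\M([0,\tau])}\ge\frac12 d_{\mu,\beta}$ for large $n$.

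Your primary route uses the mirror ordering. It keeps the fixed margin $d_{\mu,\beta}$, but the residual $\K_{\mu-\mu_n}\tilde u_{\mu_n}$ now lies along a moving trajectory. This forces you to prove $\sup_{n\ge N}\norm{\tilde u_{\mu_n}}_{L^2(-\tau,\Tend;V)}<\infty$. Your derivation of this bound from \Cref{cor:uniform-family}, via $\kappa_{\mu_n}(\beta)\le\kappa_\mu(\beta)+\norm{\mu_n-\mu}_{\M([0,\tau])}$ and \Cref{lem:joined-trajectory}, is valid.

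However, your claim that this ordering ``avoids any need for uniform contraction margins'' is not accurate. \Cref{cor:uniform-family} needs exactly $q_{\mu_n,\beta}\le q_*<1$ along the tail, so the uniform margin has only been moved into the a priori bound. It can be genuinely avoided by absorption: insert $\norm{\tilde u_{\mu_n}}_{L^2(-\tau,\Tend;V)}\le\sqrt2\norm{\tilde u_\mu}_{L^2(-\tau,\Tend;V)}+e^{\beta\Tend}\norm{u_{\mu_n}-u_\mu}_{L^2_\beta(0,\Tend;V)}$ into \eqref{eq:tv-stability}, which closes once $\Lambda_1e^{\beta\Tend}\norm{\mu_n-\mu}_{\M([0,\tau])}\le\frac12 d_{\mu,\beta}$.

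The paper's ordering has three advantages. It is shorter. It gives a Lipschitz constant depending only on the fixed kernel $\mu$ and on $\norm{\tilde u_\mu}_{L^2(-\tau,\Tend;V)}$, which is what \Cref{cor:local-tv-lipschitz} records. It is also the ordering reused in \Cref{thm:narrow-stability}, where \Cref{prop:operator-narrow} controls $\K_{\mu_n-\mu}G$ only along a fixed trajectory $G$. Your primary ordering would there first have to split $\tilde u_{\mu_n}$ back onto $\tilde u_\mu$.
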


\begin{proof}
The convolution estimate \eqref{eq:measure-convolution-bound}, applied with $\sigma=\mu-\nu$, gives
\begin{align*}
  \norm{R_{\mu,\nu}}_{L^2_\beta(0,\Tend;V^*)}
  \leq
  \norm{R_{\mu,\nu}}_{L^2(0,\Tend;V^*)}
  \leq
  \Lambda_1\norm{\mu-\nu}_{\M([0,\tau])}
  \norm{\tilde u_\nu}_{L^2(-\tau,\Tend;V)}.
\end{align*}
Combining this bound with \eqref{eq:residual-stability} proves \eqref{eq:tv-stability}. For the sequential assertion, choose $\beta\ge0$ so that $d_{\mu,\beta}>0$.  Because
\begin{align*}
  \kappa_\sigma(\beta)
  =
  \norm{e^{-\beta(\cdot)}\sigma}_{\M([0,\tau])},
  \quad
  \sigma\in\M([0,\tau]),
\end{align*}
the reverse triangle inequality gives
\begin{align}
  \abs{\kappa_{\mu_n}(\beta)-\kappa_\mu(\beta)}
  &\leq
  \norm{e^{-\beta(\cdot)}(\mu_n-\mu)}_{\M([0,\tau])}
  \leq
  \norm{\mu_n-\mu}_{\M([0,\tau])}. \label{eq:kappa-tv-continuity}
\end{align}
Consequently,
\begin{align*}
  d_{\mu_n,\beta}
  \geq
  d_{\mu,\beta}
  -\Lambda_1\norm{\mu_n-\mu}_{\M([0,\tau])}
  \geq \frac12 d_{\mu,\beta}
\end{align*}
for all sufficiently large $n$. Applying \eqref{eq:residual-full-unweighted} with left-hand kernel $\mu_n$ and reference kernel $\mu$, and using $d_{\mu_n,\beta}\ge d_{\mu,\beta}/2$, we have
\begin{align*}
  &\norm{u_{\mu_n}-u_\mu}_{\W(0,\Tend)}
  +\norm{u_{\mu_n}-u_\mu}_{\mathcal C([0,\Tend];H)}
  \\
  &\quad \leq
  2e^{\beta\Tend}
  \frac{1+\sqrt{\alpha_0}+\alpha_0+\Lambda_0}
       {d_{\mu,\beta}}
  \norm{R_{\mu_n,\mu}}_{L^2_\beta(0,\Tend;V^*)}
  \longrightarrow0.
\end{align*}
This proves the asserted convergence.
\end{proof}

\begin{corollary}[Local Lipschitz continuity in total variation]
\label[corollary]{cor:local-tv-lipschitz}
Fix $\mu\in\M^0([0,\tau])$ and choose $\beta\geq0$ such that
$d_{\mu,\beta}>0$.  Let $r>0$ satisfy
\begin{align*}
  \Lambda_1 r\leq \frac12 d_{\mu,\beta}.
\end{align*}
Then, for every $\nu\in\M^0([0,\tau])$ with
$\norm{\nu-\mu}_{\M([0,\tau])}\leq r$, the corresponding solutions with
common data satisfy
\begin{align}\label{eq:local-tv-lipschitz}
  &\norm{u_\nu-u_\mu}_{\W(0,\Tend)}
  +\norm{u_\nu-u_\mu}_{\mathcal C([0,\Tend];H)} \notag\\
  &\quad\leq
  2e^{\beta\Tend}
  \frac{1+\sqrt{\alpha_0}+\alpha_0+\Lambda_0}{d_{\mu,\beta}}
  \Lambda_1\norm{\tilde u_\mu}_{L^2(-\tau,\Tend;V)}
  \norm{\nu-\mu}_{\M([0,\tau])}.
\end{align}
Thus, the solution map is locally Lipschitz from total variation into
$\W(0,\Tend)\cap\mathcal C([0,\Tend];H)$.
\end{corollary}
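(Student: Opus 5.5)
The plan is to apply the residual estimate \eqref{eq:residual-full-unweighted} of \Cref{thm:residual-stability} with the roles chosen so that the constant does not depend on $\nu$. The left-hand kernel is $\nu$, which fixes the resolvent margin. The reference kernel is the fixed $\mu$, which supplies the reference trajectory $\tilde u_\mu$. With this choice the residual is $R_{\nu,\mu}=\K_{\nu-\mu}\tilde u_\mu$, and its only $\nu$-dependence is through the difference of the measures.

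\textbf{Step 1: lower bound on the margin.} Since $|e^{-\beta s}|\le 1$, we have $\kappa_\sigma(\beta)=\norm{e^{-\beta(\cdot)}\sigma}_{\M([0,\tau])}$. By the reverse triangle inequality \eqref{eq:kappa-tv-continuity},
\begin{align*}
|\kappa_\nu(\beta)-\kappa_\mu(\beta)|\le\norm{\nu-\mu}_{\M([0,\tau])}\le r.
\end{align*}
Therefore
\begin{align*}
d_{\nu,\beta}=\alpha_0-\Lambda_1\kappa_\nu(\beta)\ge d_{\mu,\beta}-\Lambda_1 r\ge \tfrac12 d_{\mu,\beta}>0 .
\end{align*}
This gives two things. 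First, $\beta$ is admissible for $\nu$: $q_{\nu,\beta}<1$, so $u_\nu$ exists by \Cref{thm:wellposedness}. Second, \Cref{thm:residual-stability} applies with left kernel $\nu$, and $d_{\nu,\beta}^{-1}\le 2d_{\mu,\beta}^{-1}$.

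\textbf{Step 2: bound on the residual.} By \Cref{lem:weighted-equivalence} followed by \eqref{eq:measure-convolution-bound} with $\sigma=\nu-\mu$,
\begin{align*}
\norm{R_{\nu,\mu}}_{L^2_\beta(0,\Tend;V^*)}\le\norm{R_{\nu,\mu}}_{L^2(0,\Tend;V^*)}\le\Lambda_1\norm{\nu-\mu}_{\M([0,\tau])}\norm{\tilde u_\mu}_{L^2(-\tau,\Tend;V)}.
\end{align*}
Here $\tilde u_\mu\in L^2(-\tau,\Tend;V)$ by \Cref{lem:joined-trajectory}.

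\textbf{Step 3: conclusion.} Substituting Steps 1 and 2 into \eqref{eq:residual-full-unweighted} gives exactly \eqref{eq:local-tv-lipschitz}. The prefactor is $2e^{\beta\Tend}(1+\sqrt{\alpha_0}+\alpha_0+\Lambda_0)/d_{\mu,\beta}$, which is independent of $\nu$. Together with the factor $\Lambda_1\norm{\tilde u_\mu}_{L^2(-\tau,\Tend;V)}$, which is fixed once $\mu$ and the data are fixed, this is the local Lipschitz property.

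The only delicate point is the asymmetry. If we instead used $\nu$ as the reference kernel, the bound would involve $\norm{\tilde u_\nu}$, which would then need to be controlled uniformly, for instance via \Cref{cor:uniform-family}. Choosing $\mu$ as the reference kernel avoids this entirely, so I expect no real obstacle beyond the margin bound in Step 1.
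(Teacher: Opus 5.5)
Your proposal is correct and is essentially the paper's proof. The paper makes the same choice of $\nu$ as the left-hand kernel and $\mu$ as the reference kernel in \eqref{eq:residual-full-unweighted}, bounds the margin $d_{\nu,\beta}\ge\tfrac12 d_{\mu,\beta}$ by the reverse triangle inequality for $\kappa$, and bounds the residual $\K_{\nu-\mu}\tilde u_\mu$ by \eqref{eq:measure-convolution-bound}.
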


\begin{proof}
The reverse triangle inequality for the weighted variation masses gives
\begin{align*}
  d_{\nu,\beta}
  &\geq d_{\mu,\beta}
  -\Lambda_1\norm{\nu-\mu}_{\M([0,\tau])}
  \geq \frac12 d_{\mu,\beta}.
\end{align*}
Apply \eqref{eq:residual-full-unweighted} with left-hand kernel $\nu$
and reference kernel $\mu$.  The convolution estimate
\eqref{eq:measure-convolution-bound} yields
\begin{align*}
  \norm{\K_{\nu-\mu}\tilde u_\mu}_{L^2_\beta(0,\Tend;V^*)}
  \leq
  \Lambda_1\norm{\nu-\mu}_{\M([0,\tau])}
  \norm{\tilde u_\mu}_{L^2(-\tau,\Tend;V)}.
\end{align*}
Combining the preceding two estimates proves
\eqref{eq:local-tv-lipschitz}.
\end{proof}

\subsection{Narrow convergence}
We pass from total-variation convergence to narrow convergence.  Recall that $\mu_n\rightharpoonup\mu$ narrowly in $\M_+([0,\tau])$ if
\begin{align*}
  \int_{[0,\tau]}\varphi\,\dd\mu_n
  \longrightarrow
  \int_{[0,\tau]}\varphi\,\dd\mu
  \quad\text{for any }\varphi\in \mathcal{C}([0,\tau]).
\end{align*}
Because the interval is compact, narrow convergence also implies convergence, and therefore uniform boundedness, of the total masses.

\begin{lemma}[Banach-valued narrow convergence]\label[lemma]{lem:banach-narrow}
Let $Y$ be a Banach space, let $F\in \mathcal{C}([0,\tau];Y)$, and let $\mu_n\rightharpoonup\mu$ narrowly in $\M_+([0,\tau])$.  Then,
\begin{align}\label{eq:banach-narrow}
  \int_{[0,\tau]}F(s)\,\dd\mu_n(s)
  \longrightarrow
  \int_{[0,\tau]}F(s)\,\dd\mu(s)
  \quad\text{in }Y.
\end{align}
\end{lemma}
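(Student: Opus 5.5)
The plan is to approximate $F$ uniformly by functions with finite-dimensional range, for which the claim reduces to the scalar definition of narrow convergence, and then pass to the limit using uniform bounds on the masses.

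First, check that the integrals exist. $F$ is continuous on the compact set $[0,\tau]$, so $F([0,\tau])$ is compact and therefore separable. Thus $F$ is strongly Borel measurable and bounded, and both $\int F\,\dd\mu_n$ and $\int F\,\dd\mu$ exist as Bochner integrals with respect to the finite positive measures $\mu_n$ and $\mu$. Testing the narrow convergence with $\varphi\equiv1$ gives $\mu_n([0,\tau])\to\mu([0,\tau])$, so $M:=\sup_n\mu_n([0,\tau])+\mu([0,\tau])<\infty$.

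Next, construct the approximation. Fix $\varepsilon>0$. $F$ is uniformly continuous, so choose a partition $0=s_0<s_1<\dots<s_K=\tau$ with mesh small enough that $\norm{F(s)-F(s')}_Y<\varepsilon$ whenever $|s-s'|\le$ mesh. Let $\{\varphi_k\}_{k=0}^K\subset\mathcal C([0,\tau])$ be the piecewise linear hat functions subordinate to this partition. They satisfy $\varphi_k\ge0$ and $\sum_k\varphi_k\equiv1$, and the support of $\varphi_k$ lies within one mesh length of $s_k$. Set
\begin{align*}
F_\varepsilon(s):=\sum_{k=0}^K\varphi_k(s)F(s_k).
\end{align*}
Since $F(s)-F_\varepsilon(s)=\sum_k\varphi_k(s)\bigl(F(s)-F(s_k)\bigr)$ is a convex combination of vectors of norm less than $\varepsilon$, we get $\sup_s\norm{F(s)-F_\varepsilon(s)}_Y\le\varepsilon$.

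For $F_\varepsilon$ the claim is immediate from linearity of the Bochner integral:
\begin{align*}
\int F_\varepsilon\,\dd\mu_n=\sum_k\Bigl(\int\varphi_k\,\dd\mu_n\Bigr)F(s_k)\longrightarrow\sum_k\Bigl(\int\varphi_k\,\dd\mu\Bigr)F(s_k)=\int F_\varepsilon\,\dd\mu,
\end{align*}
since this is a finite sum and each scalar coefficient converges by narrow convergence.

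Finally, an $\varepsilon/3$ argument closes the proof. By the variation estimate for Bochner integrals with respect to positive measures,
\begin{align*}
\norm{\int (F-F_\varepsilon)\,\dd\mu_n}_Y\le\varepsilon\,\mu_n([0,\tau]),
\end{align*}
and likewise for $\mu$. Hence
\begin{align*}
\limsup_n\norm{\int F\,\dd\mu_n-\int F\,\dd\mu}_Y\le\varepsilon M,
\end{align*}
and letting $\varepsilon\to0$ proves \eqref{eq:banach-narrow}.

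The only genuine point of the argument is the uniform mass bound $M<\infty$, which is exactly where non-negativity and the test function $\varphi\equiv1$ are used. The construction of $F_\varepsilon$ is routine.
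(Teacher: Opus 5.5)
Your proposal is correct and follows essentially the same route as the paper. Both proofs approximate $F$ uniformly within $\varepsilon$ by a finite combination $\sum_j\chi_j(s)F(s_j)$, where the $\chi_j$ form a continuous partition of unity (your hat functions are one concrete choice). They then apply scalar narrow convergence to each coefficient $\int\chi_j\,\dd\mu_n$ and close with the uniform mass bound obtained by testing against $\varphi\equiv1$.
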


\begin{proof}
All integrals below are Bochner integrals.  Because $[0,\tau]$ is compact and $F$ is norm-continuous, $F$ is uniformly continuous, and its range is compact in $Y$.

Let $\varepsilon>0$.  Choose a finite relatively open cover $\{U_j\}_{j=1}^N$ of $[0,\tau]$ and points $s_j\in U_j$ such that
\begin{align*}
  \norm{F(s)-F(s_j)}_Y<\varepsilon,
  \quad s\in U_j.
\end{align*}
Let $\{\chi_j\}_{j=1}^N$ be a continuous partition of unity subordinate to this cover, and we define
\begin{align*}
  F_N(s):=\sum_{j=1}^N\chi_j(s)F(s_j).
\end{align*}
Then,
\begin{align*}
  \norm{F(s)-F_N(s)}_Y
  &\le
  \sum_{j=1}^N\chi_j(s)
  \norm{F(s)-F(s_j)}_Y
  <\varepsilon
\end{align*}
for any $s\in[0,\tau]$. By scalar narrow convergence,
\begin{align*}
  \int_{[0,\tau]}\chi_j\,\dd\mu_n
  \longrightarrow
  \int_{[0,\tau]}\chi_j\,\dd\mu,
  \quad j=1,\dots,N.
\end{align*}
Therefore,
\begin{align*}
  \int_{[0,\tau]}F_N\,\dd\mu_n
  \longrightarrow
  \int_{[0,\tau]}F_N\,\dd\mu
  \quad\text{in }Y.
\end{align*}
Furthermore, testing narrow convergence with the constant function $1$ shows that
\begin{align*}
  \mu_n([0,\tau])\longrightarrow\mu([0,\tau]),
\end{align*}
and therefore
\begin{align*}
  M_*:=\sup_n\mu_n([0,\tau])<\infty.
\end{align*}
Consequently,
\begin{align*}
  \norm{
    \int_{[0,\tau]} F\,\dd\mu_n-\int_{[0,\tau]} F\,\dd\mu
  }_Y
  &\leq
  \norm{
    \int_{[0,\tau]} F_N\,\dd\mu_n-\int_{[0,\tau]} F_N\,\dd\mu
  }_Y\\
  &\quad
  +\varepsilon\bigl(M_*+\mu([0,\tau])\bigr).
\end{align*}
Taking the upper limit as $n\to\infty$ gives
\begin{align*}
  \limsup_{n\to\infty}
  \norm{
    \int_{[0,\tau]} F\,\dd\mu_n-\int_{[0,\tau]} F\,\dd\mu
  }_Y
  \leq
  \varepsilon\bigl(M_*+\mu([0,\tau])\bigr).
\end{align*}
Because $\varepsilon>0$ is arbitrary, \eqref{eq:banach-narrow} follows.
\end{proof}

Although $G$ need not be continuous in time, the strong continuity of translations in $L^2(\R;V)$ turns narrow convergence of non-negative kernels into strong convergence of the corresponding history terms along the fixed trajectory $G$.

\begin{proposition}[Narrow convergence of history operators along a
fixed trajectory]\label[proposition]{prop:operator-narrow}
Let $\mu_n,\mu\in\M_+([0,\tau])$ with $\mu_n\rightharpoonup\mu$ narrowly, and let $G\in L^2(\R;V)$.  We define
\begin{align*}
  R_n
  :=
  \K_{\mu_n-\mu}G\big|_{(0,\Tend)}
  \in L^2(0,\Tend;V^*).
\end{align*}
Then,
\begin{align}\label{eq:operator-narrow}
  R_n\longrightarrow0
  \quad\text{in }L^2(0,\Tend;V^*).
\end{align}
Consequently, the same convergence holds in $L^2_\beta(0,\Tend;V^*)$ for any fixed $\beta\ge0$.
\end{proposition}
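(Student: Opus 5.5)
The plan is to lift the history operator to a Bochner integral over $s$ of translated trajectories, regarded as elements of $L^2(0,\Tend;V^*)$. Set $Y:=L^2(0,\Tend;V^*)$ and define $F:[0,\tau]\to Y$ by
\begin{align*}
  F(s):=\bigl[t\mapsto A_1G(t-s)\bigr]\big|_{(0,\Tend)}.
\end{align*}
Strong continuity of translations in $L^2(\R;V)$, combined with $A_1\in\mathcal L(V,V^*)$, gives
\begin{align*}
  \norm{F(s)-F(s')}_Y\le\Lambda_1\norm{G(\cdot-s)-G(\cdot-s')}_{L^2(\R;V)}\to0
  \quad\text{as } s'\to s,
\end{align*}
so $F\in\mathcal C([0,\tau];Y)$. \Cref{lem:banach-narrow} then yields
\begin{align*}
  \int_{[0,\tau]}F\,\dd\mu_n\longrightarrow\int_{[0,\tau]}F\,\dd\mu
  \quad\text{in } Y.
\end{align*}

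The main step is to identify the $Y$-valued Bochner integral $\int F\,\dd\sigma$ with $\K_\sigma G$ for $\sigma\in\{\mu_n,\mu\}$. Both objects are elements of $Y$, so I compare them by pairing against test functions. Take $\phi\in L^2(0,\Tend;V)$, which lies in the dual of $Y$ because $V$ is reflexive. Bounded functionals commute with Bochner integrals, so
\begin{align*}
  \Bigl\langle \int F\,\dd\sigma,\phi\Bigr\rangle
  =\int_{[0,\tau]}\int_0^{\Tend}\pair{A_1G(t-s)}{\phi(t)}\,\dd t\,\dd\sigma(s).
\end{align*}
Fubini is justified by the integrability bound already established in the proof of \Cref{lem:measure-convolution}. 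After swapping the order, the right-hand side becomes $\int_0^{\Tend}\pair{(\K_\sigma G)(t)}{\phi(t)}\,\dd t$. The functions $\phi$ separate points of $Y$, so the two elements coincide.

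I expect this identification to be the only delicate point, because measurability and the interchange of integrals are needed jointly in $(t,s)$. Lemma \ref{lem:measure-convolution} already supplies a Borel representative and the finite $L^1$ bound, which settles both issues. Once the identification holds, linearity gives
\begin{align*}
  R_n=\int F\,\dd\mu_n-\int F\,\dd\mu\longrightarrow0
  \quad\text{in } L^2(0,\Tend;V^*).
\end{align*}
The weighted statement then follows immediately from $\norm{\cdot}_{L^2_\beta}\le\norm{\cdot}_{L^2}$ in \Cref{lem:weighted-equivalence}.
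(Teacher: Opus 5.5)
Your proposal is correct and follows the paper's proof essentially step for step. You use the continuous map $F:[0,\tau]\to Y$ obtained from translation continuity, apply \Cref{lem:banach-narrow}, identify $\int F\,\dd\sigma$ with $\K_\sigma G$ by pairing with $L^2(0,\Tend;V)$ and using scalar Fubini, and compare the weighted and unweighted norms at the end. One small correction: for general $\phi\in L^2(0,\Tend;V)$, the Fubini bound you need is the Cauchy--Schwarz estimate $\Lambda_1\norm{\sigma}_{\M([0,\tau])}\norm{G}_{L^2(\R;V)}\norm{\phi}_{L^2(0,\Tend;V)}$, as the paper writes it, not literally the $L^1$ bound from the proof of \Cref{lem:measure-convolution}.
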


\begin{proof}
We set
\begin{align*}
  Y:=L^2(0,\Tend;V^*),
  \quad
  F(s):=A_1G(\cdot-s)\big|_{(0,\Tend)}.
\end{align*}
Translation continuity in $L^2(\R;V)$ and the boundedness of $A_1$ give
\begin{align*}
  \norm{F(s)-F(r)}_Y
  &\leq
  \Lambda_1
  \norm{G(\cdot-s)-G(\cdot-r)}_{L^2(\R;V)}
  \longrightarrow0
\end{align*}
as $s\to r$.  Therefore, $F\in\mathcal{C}([0,\tau];Y)$. We identify the $Y$-valued Bochner integral of $F$ with the history operator.  Let $\sigma\in\M([0,\tau])$ and $\Phi\in L^2(0,\Tend;V)$.  We have
\begin{align*}
  &\int_{[0,\tau]}\int_0^\Tend
  \abs{\pair{A_1G(t-s)}{\Phi(t)}}\,\dd t\,\dd|\sigma|(s)\\
  &\quad \leq
  \Lambda_1\norm{\sigma}_{\M([0,\tau])}
  \norm{G}_{L^2(\R;V)}
  \norm{\Phi}_{L^2(0,\Tend;V)}.
\end{align*}
Scalar Fubini's theorem therefore gives
\begin{align*}
  &\int_0^\Tend
  \pair{
    \left(\int_{[0,\tau]}F(s)\,\dd\sigma(s)\right)(t)
  }{\Phi(t)}\,\dd t \\
  &\quad =
  \int_{[0,\tau]}
  \int_0^\Tend
  \pair{A_1G(t-s)}{\Phi(t)}
  \,\dd t\,\dd\sigma(s)
  =
  \int_0^\Tend
  \pair{(\K_\sigma G)(t)}{\Phi(t)}
  \,\dd t.
\end{align*}
Since $V$ is Hilbert, the integrated duality pairing identifies
$L^2(0,\Tend;V)$ canonically with $Y^*$.  Because the preceding identity
holds for every $\Phi\in L^2(0,\Tend;V)$,
\begin{align}\label{eq:Y-integral-history-identification}
  \int_{[0,\tau]}F(s)\,\dd\sigma(s)
  =
  \K_\sigma G\big|_{(0,\Tend)}
  \quad\text{in }Y.
\end{align}
Applying \Cref{lem:banach-narrow} to $\mu_n$ and $\mu$, and then using \eqref{eq:Y-integral-history-identification}, yields
\begin{align*}
  R_n
  =
  \int_{[0,\tau]}F(s)\,\dd\mu_n(s)
  -
  \int_{[0,\tau]}F(s)\,\dd\mu(s)
  \longrightarrow0
  \quad\text{in }Y.
\end{align*}
This proves \eqref{eq:operator-narrow}.  Finally,
\begin{align*}
  \norm{R_n}_{L^2_\beta(0,\Tend;V^*)}
  \leq
  \norm{R_n}_{L^2(0,\Tend;V^*)}
\end{align*}
for any fixed $\beta\ge0$, which proves the weighted convergence.
\end{proof}

The following lemma shows that narrow convergence to a kernel without an atom at the origin yields a common positive weighted resolvent margin along the tail of the sequence.


\begin{lemma}[A common exponential weight under narrow convergence]
\label[lemma]{lem:common-weight}
Let $\mu_n,\mu\in\M_+([0,\tau])$ and assume that $\mu_n\rightharpoonup\mu$ narrowly with $\mu(\{0\})=0$. For any prescribed margin $d_*\in(0,\alpha_0)$, there exist $\beta\ge0$ and $N\in\mathbb N$ such that
\begin{align}\label{eq:common-weight}
\alpha_0-\Lambda_1\kappa_{\mu_n}(\beta)\ge d_*, \quad n\ge N,
\end{align}
and the same inequality holds with $\mu_n$ replaced by $\mu$.
\end{lemma}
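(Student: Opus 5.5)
The plan is to reduce the uniform bound on $\kappa_{\mu_n}(\beta)$ to narrow convergence tested against a single continuous function. Since the measures are non-negative, $|\mu_n|=\mu_n$ and $|\mu|=\mu$. Hence
\begin{align*}
  \kappa_{\mu_n}(\beta)=\int_{[0,\tau]}e^{-\beta s}\,\dd\mu_n(s)
  \longrightarrow
  \int_{[0,\tau]}e^{-\beta s}\,\dd\mu(s)=\kappa_\mu(\beta)
\end{align*}
for each fixed $\beta\ge0$, because $s\mapsto e^{-\beta s}$ belongs to $\mathcal C([0,\tau])$. The only genuine issue is that $e^{-\beta s}$ converges to the discontinuous function $\mathbf 1_{\{0\}}$ as $\beta\to\infty$, so $\beta$ and $n$ cannot be interchanged. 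For this reason I will fix $\beta$ first and only then let $n\to\infty$.

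First I choose $\beta$ using the limit measure alone. Set $\eta:=\alpha_0-d_*>0$. Since $\mu(\{0\})=0$, \Cref{lem:kappa-properties} gives $\kappa_\mu(\beta)\to0$ as $\beta\to\infty$. I therefore choose $\beta\ge0$ with
\begin{align*}
  \Lambda_1\kappa_\mu(\beta)\le \eta/2 .
\end{align*}
This yields the claim for $\mu$ itself, since $\alpha_0-\Lambda_1\kappa_\mu(\beta)\ge\alpha_0-\eta/2\ge d_*$.

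Second, with this $\beta$ held fixed, I use the scalar narrow convergence above. It gives $N$ such that $\Lambda_1|\kappa_{\mu_n}(\beta)-\kappa_\mu(\beta)|\le\eta/2$ for all $n\ge N$. Then for $n\ge N$,
\begin{align*}
  \alpha_0-\Lambda_1\kappa_{\mu_n}(\beta)
  \ge \alpha_0-\Lambda_1\kappa_\mu(\beta)-\eta/2
  \ge \alpha_0-\eta=d_*,
\end{align*}
which is \eqref{eq:common-weight}.

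The main obstacle is conceptual rather than computational: the order of quantifiers, as discussed above. The sequence $\mu_n$ may carry atoms at or mass concentrating near the origin, which is why only a tail $n\ge N$ is controlled. Non-negativity is essential here. For signed measures, $|\mu_n|$ need not converge narrowly to $|\mu|$, so the identity $\kappa_{\mu_n}(\beta)=\int e^{-\beta s}\,\dd\mu_n$ is exactly what makes the argument work. No further estimates are needed.
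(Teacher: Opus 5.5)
Your proposal is correct and follows the paper's proof essentially step by step. You first fix $\beta$ from the limit measure alone via \Cref{lem:kappa-properties}, so that $\Lambda_1\kappa_\mu(\beta)$ is at most half the slack $\eta=\alpha_0-d_*$; then, with $\beta$ held fixed, you test narrow convergence against the single continuous function $s\mapsto e^{-\beta s}$ (using $|\mu_n|=\mu_n$) to control the tail $n\ge N$. The only difference is cosmetic: you use non-strict inequalities where the paper uses strict ones.
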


\begin{proof}
We set $\eta:=\alpha_0-d_*>0$. Because $\mu(\{0\})=0$, \Cref{lem:kappa-properties} allows us to choose $\beta\ge0$ so that
\begin{align*}
  \Lambda_1\kappa_\mu(\beta)<\frac{\eta}{2}.
\end{align*}
For non-negative measures,
\begin{align*}
  \kappa_{\mu_n}(\beta)
  =
  \int_{[0,\tau]}e^{-\beta s}\,\dd\mu_n(s).
\end{align*}
Because $s\mapsto e^{-\beta s}$ is continuous on $[0,\tau]$, narrow convergence gives
\begin{align*}
  \kappa_{\mu_n}(\beta)\longrightarrow\kappa_\mu(\beta).
\end{align*}
Therefore, there exists $N\in\mathbb N$ such that
\begin{align*}
  \Lambda_1
  \abs{\kappa_{\mu_n}(\beta)-\kappa_\mu(\beta)}
  <\frac{\eta}{2},
  \quad n\ge N.
\end{align*}
Therefore,
\begin{align*}
  \Lambda_1\kappa_{\mu_n}(\beta)
  <
  \Lambda_1\kappa_\mu(\beta)+\frac{\eta}{2}
  <\eta,
  \quad  n\ge N,
\end{align*}
which proves \eqref{eq:common-weight}.  For the limit measure,
\begin{align*}
  \alpha_0-\Lambda_1\kappa_\mu(\beta)
  >
  \alpha_0-\frac{\eta}{2}
  =
  \frac{\alpha_0+d_*}{2}
  >d_*.
\end{align*}
\end{proof}

\begin{remark}[Where positivity enters the common-weight argument]
For signed measures the contraction quantity contains the variation measure $|\mu_n|$, whereas weak-star convergence of $\mu_n$ does not in general imply convergence of $|\mu_n|$.  The preceding argument is therefore intentionally stated for non-negative measures.  Total-variation convergence of signed measures still yields a common weight by \eqref{eq:kappa-tv-continuity}.
\end{remark}

The preceding common-weight and fixed-trajectory results combine to yield strong solution stability under narrow convergence.

\begin{theorem}[Narrow kernel stability]\label[theorem]{thm:narrow-stability}
Let $\mu_n,\mu\in\M_+^0([0,\tau])$ and assume
\begin{align*}
  \mu_n\rightharpoonup\mu
\end{align*}
narrowly on $[0,\tau]$. Let $u_n:=u_{\mu_n}$ and $u:=u_\mu$ be the corresponding solutions with common data.  Then,
\begin{align}\label{eq:narrow-solution-convergence}
  u_n\longrightarrow u
  \quad\text{in }\W(0,\Tend)\cap \mathcal{C}([0,\Tend];H).
\end{align}
\end{theorem}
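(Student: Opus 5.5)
The plan is to combine the residual principle of \Cref{thm:residual-stability} with the common weight of \Cref{lem:common-weight} and the fixed-trajectory convergence of \Cref{prop:operator-narrow}, in each case using the limit kernel $\mu$ as the reference kernel. Since $\mu\in\M_+^0([0,\tau])$, we have $\mu(\{0\})=0$.

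First, fix a margin, say $d_*:=\alpha_0/2$. \Cref{lem:common-weight} then yields $\beta\ge0$ and $N\in\mathbb N$ such that
\begin{align*}
d_{\mu_n,\beta}=\alpha_0-\Lambda_1\kappa_{\mu_n}(\beta)\ge d_* \quad\text{for all } n\ge N.
\end{align*}
The weight $\beta$ is now common to the whole tail of the sequence. Each $\mu_n$ lies in $\M^0([0,\tau])$, so \Cref{thm:wellposedness} guarantees that $u_n$ exists. For $n\ge N$, I apply \eqref{eq:residual-full-unweighted} with left-hand kernel $\mu_n$ and reference kernel $\mu$:
\begin{align*}
\norm{u_n-u}_{\W(0,\Tend)}+\norm{u_n-u}_{\mathcal C([0,\Tend];H)}
\le e^{\beta\Tend}\frac{1+\sqrt{\alpha_0}+\alpha_0+\Lambda_0}{d_*}\norm{R_{\mu_n,\mu}}_{L^2_\beta(0,\Tend;V^*)},
\end{align*}
where $R_{\mu_n,\mu}=\K_{\mu_n-\mu}\tilde u_\mu$. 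The prefactor does not depend on $n$, so it remains to show that this residual tends to zero.

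The crucial point is the asymmetry: the residual is evaluated along the single fixed trajectory $\tilde u_\mu$, not along $\tilde u_n$. We never need compactness or uniform bounds for the $u_n$, and total-variation convergence, which fails for atom approximations, is not required. Set $G:=\tilde u_\mu$ extended by zero outside $(-\tau,\Tend)$. By \Cref{lem:joined-trajectory}, $G\in L^2(\R;V)$, and by the convention following \Cref{lem:measure-convolution}, $\K_{\mu_n-\mu}G=\K_{\mu_n-\mu}\tilde u_\mu$. \Cref{prop:operator-narrow} then gives $R_{\mu_n,\mu}\to0$ in $L^2_\beta(0,\Tend;V^*)$ for our fixed $\beta$. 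Combining this with the displayed estimate proves \eqref{eq:narrow-solution-convergence}.

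The only delicate step is the uniform lower bound on $d_{\mu_n,\beta}$. Narrow convergence gives no control on $\norm{\mu_n-\mu}_{\M([0,\tau])}$, so the reverse-triangle argument of \Cref{cor:tv-stability} is unavailable. This is exactly where non-negativity and $\mu(\{0\})=0$ enter through \Cref{lem:common-weight}: for non-negative measures $\kappa_{\mu_n}(\beta)$ is the integral of a continuous function against $\mu_n$, so it converges to $\kappa_\mu(\beta)$. Because the limit measure has no atom at the origin, this value can be made small by the choice of $\beta$. The remaining steps are direct invocations of earlier results.
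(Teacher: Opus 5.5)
Your proposal is correct and follows the paper's proof essentially verbatim. Both take a common weight from \Cref{lem:common-weight} with $d_*=\alpha_0/2$ and apply \eqref{eq:residual-full-unweighted} with left-hand kernel $\mu_n$ and reference kernel $\mu$, then use \Cref{prop:operator-narrow} on the zero extension of $\tilde u_\mu$ to send the residual to zero.
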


\begin{proof}
Let $d_*:=\alpha_0/2$.  By \Cref{lem:common-weight}, there exist a common $\beta\ge0$ and $N\in\mathbb N$ such that
\begin{align}\label{eq:uniform-contraction}
  d_{\mu_n,\beta}\ge d_*,
  \quad n\ge N.
\end{align}
We extend the joined trajectory $\tilde u$ by zero outside $(-\tau,\Tend)$, and denote the resulting element of $L^2(\R;V)$ by $G$. Because
\begin{align*}
  -\tau<t-s<\Tend,
  \quad
   0<t<\Tend,\ 0 \leq s \leq\tau,
\end{align*}
the history terms generated by $G$ and $\tilde u$ coincide on $(0,\Tend)$. Therefore, \Cref{prop:operator-narrow} gives
\begin{align}\label{eq:residual-narrow-zero}
  \norm{\K_{\mu_n-\mu}\tilde u}
       {L^2_\beta(0,\Tend;V^*)}
  \longrightarrow0.
\end{align}
We apply \eqref{eq:residual-full-unweighted} with left-hand kernel $\mu_n$, reference kernel $\mu$, and the common weight $\beta$. Using \eqref{eq:uniform-contraction}, we obtain
\begin{align*}
  &\norm{u_n-u}_{\W(0,\Tend)}
  +\norm{u_n-u}_{\mathcal C([0,\Tend];H)}
  \\
  &\quad \leq
  e^{\beta\Tend}
  \frac{1+\sqrt{\alpha_0}+\alpha_0+\Lambda_0}{d_*}
  \norm{\K_{\mu_n-\mu}\tilde u}
       {L^2_\beta(0,\Tend;V^*)}.
\end{align*}
The right-hand side tends to zero by \eqref{eq:residual-narrow-zero}, which proves \eqref{eq:narrow-solution-convergence}.
\end{proof}

\begin{remark}[Why an atom at zero is excluded in \Cref{thm:narrow-stability}]
If a kernel sequence concentrates at zero, then $\kappa_{\mu_n}(\beta)$ need not become small uniformly in $n$ for any fixed $\beta$.  The limiting measure may contain an atom at the origin, in which case the limit is no longer a genuinely retarded problem: the atom changes the instantaneous form.  Such a limit must be formulated after splitting off the atom as in \Cref{rem:atom-zero}.  The present theorem covers narrow limits whose limiting measure has no atom at zero, including concentration at a positive delay.
\end{remark}

\begin{remark}[Why total variation is too strong for concentration]\label[remark]{rem:tv-concentration}
Let $\tau_d\in(0,\tau]$, and let $\mu_\varepsilon$ be absolutely continuous with respect to Lebesgue measure, non-negative, and of mass $m>0$.  Then, $\mu_\varepsilon$ and $m\delta_{\tau_d}$ are mutually singular, and hence
\begin{align*}
  \norm{\mu_\varepsilon-m\delta_{\tau_d}}_{\M([0,\tau])}
  =\mu_\varepsilon([0,\tau])+m
  =2m.
\end{align*}
Thus, total-variation convergence cannot detect distributed-to-discrete concentration.  Narrow convergence detects it qualitatively; quantitative transport estimates are developed in a companion manuscript.
\end{remark}

\begin{corollary}[Distributed-to-discrete convergence]
Let $m\geq0$ and $\tau_d\in(0,\tau]$.  Let $\mu_n\in\M_+([0,\tau])$ be absolutely continuous with respect to
Lebesgue measure and assume that
\begin{align*}
  \mu_n\rightharpoonup m\delta_{\tau_d}
  \quad\text{narrowly as }n\to\infty.
\end{align*}
Then, the corresponding solutions with common data satisfy
\begin{align*}
  u_{\mu_n}\longrightarrow u_{m\delta_{\tau_d}}
  \quad\text{in }
  \W(0,\Tend)\cap\mathcal{C}([0,\Tend];H).
\end{align*}
\end{corollary}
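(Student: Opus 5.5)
The plan is to deduce the corollary directly from \Cref{thm:narrow-stability}, applied with limit kernel $\mu:=m\delta_{\tau_d}$. To do so, I will check that all of its hypotheses hold: the sequence and the limit must lie in $\M_+^0([0,\tau])$, and narrow convergence must hold (the latter is assumed).

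\textbf{The limit kernel.} Since $m\ge0$, we have $m\delta_{\tau_d}\in\M_+([0,\tau])$. Because $\tau_d>0$, it also satisfies
\[
  |m\delta_{\tau_d}|(\{0\})=m\,\delta_{\tau_d}(\{0\})=0,
\]
so $m\delta_{\tau_d}\in\M_+^0([0,\tau])$. The degenerate case $m=0$ needs no separate treatment: then $\mu=0$, which is still in $\M_+^0([0,\tau])$, and the argument is unchanged.

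\textbf{The approximating kernels.} Each $\mu_n$ is non-negative by assumption. It is absolutely continuous with respect to Lebesgue measure, and Lebesgue measure assigns zero mass to $\{0\}$, so $\mu_n(\{0\})=0$. Hence $|\mu_n|(\{0\})=\mu_n(\{0\})=0$ and $\mu_n\in\M_+^0([0,\tau])$.

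\textbf{Conclusion.} By \Cref{thm:wellposedness}, every kernel in $\M^0([0,\tau])$ yields a unique weak solution. So $u_{\mu_n}$ and $u_{m\delta_{\tau_d}}$ exist for the common data $(u_0,f,\psi)$. \Cref{thm:narrow-stability} then gives $u_{\mu_n}\to u_{m\delta_{\tau_d}}$ in $\W(0,\Tend)\cap\mathcal C([0,\Tend];H)$.

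The only step that requires care is confirming that the limit measure has no atom at the origin. This is exactly where $\tau_d>0$ is used, and it is what lets \Cref{lem:common-weight} supply a common weight $\beta$ inside the proof of \Cref{thm:narrow-stability}. Everything else is a matter of verifying hypotheses. As \Cref{rem:tv-concentration} explains, total-variation stability (\Cref{cor:tv-stability}) cannot be used instead, because $\norm{\mu_n-m\delta_{\tau_d}}_{\M([0,\tau])}$ does not tend to zero.
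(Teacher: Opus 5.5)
Your proposal is correct and follows the paper's proof. As there, absolute continuity gives $\mu_n(\{0\})=0$ and $\tau_d>0$ gives $(m\delta_{\tau_d})(\{0\})=0$, so all kernels lie in $\M_+^0([0,\tau])$ and \Cref{thm:narrow-stability} applies directly.
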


\begin{proof}
Absolute continuity gives $\mu_n(\{0\})=0$ for every $n$, while $\tau_d>0$ gives $(m\delta_{\tau_d})(\{0\})=0$. Therefore, $\mu_n,m\delta_{\tau_d}\in\M_+^0([0,\tau])$, and the assertion follows from \Cref{thm:narrow-stability}.
\end{proof}

\section{Concluding remarks}
We have established a finite-time theory for coercive evolution equations with measure-valued, form-valued delays.  For every finite signed retarded kernel, the weighted variation mass makes the unknown causal history a strict perturbation for a suitable kernel-dependent exponential weight.  The prescribed past remains an external forcing controlled by the full variation norm, and no positivity condition is imposed on the delayed form.

The residual principle separates the evolution estimate from the topology used to compare kernels.  It gives an asymmetric total-variation Lipschitz estimate, and hence local Lipschitz continuity, for signed measures, together with strong stability under narrow convergence for non-negative retarded measures whose limit has no atom at the origin.  The delayed-diffusion realisation shows that the theory applies when the delay contains principal spatial derivatives and cannot be reduced to a pivot-space delay equation.  As a direct consequence of narrow stability, distributed kernels converging weakly to an atom at a fixed positive lag generate strongly convergent solutions.

The narrow kernel-stability result also provides a consistency mechanism for numerical approximations in which a distributed delay law is replaced by a finite atomic quadrature.

Quantitative transport rates, regularity criteria for joined trajectories, and the singular boundary at zero delay are treated separately.  The present results retain a coercive instantaneous form; positive-type memory and graph-space formulations for degenerate instantaneous diffusion are studied in \cite{IshizakaCoercivity2026,IshizakaGraph2026}.  The kernel-stability estimates obtained here are, in addition, the analytic basis for identifying a memory law from observations of the state and for feedback through distributed or discrete delay; these control-theoretic questions are left to future work.

\section*{Funding}
The author declares that no funds, grants, or other support were received during the preparation of this manuscript.

\section*{Data availability}
No datasets were generated or analysed during the current study.

\section*{Competing interests}
The author declares no competing interests.

\end{document}